\newtheorem{lemma}{Lemma}[section]
\newtheorem{definition}{Definition}[section]
\newtheorem{proposition}[lemma]{Proposition}
\newtheorem{theorem}[lemma]{Theorem}
\newtheorem{remark}{Remark}[section]
\newtheorem{question}{Question}[section]
\newtheorem{corollary}[lemma]{Corollary}
\newtheorem{example}{Example}[section]
\begin{document}

\title{Flat affine manifolds and their transformations }

% for the Lie group of affine transformations of the line

\author{Medina A.$^1$,  Saldarriaga, O.$^2$, and Villabon, A.$^2$}

\subjclass[2020]{Primary: 57S20, 54H15; Secondary: 53C07, 17D25  \\ Partially Supported by CODI, Universidad de Antioquia. Project Number 2015-7654.}
\date{\today}

\begin{abstract} %This paper deals with flat affine connections on paracompact real manifolds without boundary. 
We give a characterization of flat affine connections  on  manifolds by means of a natural affine representation  of the universal covering of the Lie group of diffeomorphisms preserving the connection. From the infinitesimal point of view, this representation is determined by the  1-connection form and  the fundamental form of the bundle of linear frames of the manifold. We show that the group of affine transformations of a real flat affine $n$-dimensional manifold, acts on $\mathbb{R}^n$ leaving an open orbit when its dimension is greater than $n$. Moreover, when the dimension of the group of affine transformations is $n$, this orbit has discrete isotropy.  For any given  Lie subgroup $H$ of affine transformations of the manifold,  we show the existence of an  associative envelope of the Lie algebra of $H$, relative to the connection. The case when $M$ is a Lie group and $H$ acts on $G$ by left translations is particularly interesting.  We also exhibit some results about  flat affine manifolds whose group of affine transformations admits a flat affine bi-invariant structure.  The paper is illustrated with several examples.
\end{abstract}
\maketitle

Keywords: Flat affine manifolds,  Infinitesimal affine transformations,  Associative Envelope.

\vskip5pt
\noindent
 $^1$ Institute A. Grothendieck,   Universit\'e  Montpellier, UMR 5149 du CNRS, France and Universidad de Antioquia, Colombia

e-mail: alberto.medina@umontpellier.fr  
\vskip5pt
\noindent
 $^2$ Instituto de Matem\'aticas, Universidad de Antioquia, Medell\'in-Colombia

 e-mails: omar.saldarriaga@udea.edu.co, andresvillabon2000@gmail.com

\section{Introduction}
 
The objects of study of this paper are flat affine paracompact smooth manifolds with no boundary and their affine  transformations. A well understanding of the category of Lagrangian manifolds assumes a good knowledge of the category of flat affine manifolds (Theorem 7.8 in \cite{W}, see also \cite{F}). Recall that flat affine manifolds with holonomy reduced to $GL_n(\mathbb{Z})$ appear naturally in integrable systems and Mirror Symmetry (see \cite{KS}). %construct analytical spaces over non-archimedean fields. 

For simplicity, in what follows $M$ is a connected real n-dimensional manifold, $P=L(M)$ its bundle of linear frames, $\theta$ its fundamental 1-form, $\Gamma$  a linear connection on $P$ of connection form  $\omega$  and  $\nabla$ the covariant derivative on $M$ associated to $\Gamma$. The pair $(M,\nabla)$ is called a flat affine manifold if the curvature and torsion tensors of $\nabla$ are both null. In the case where $M$ is a Lie group and the connection is left invariant, we call it a flat affine Lie group.  It is well known that a  Lie group is flat affine if and only if its Lie algebra is endowed with a left symmetric product compatible with the bracket. For a rather complete overview of flat affine manifolds, the reader can refer to \cite{Gol}. 

 An affine transformation of $(M,\nabla)$ is a diffeomorphism $f$ of $M$  whose derivative map $f_*:TM\longrightarrow TM$ sends parallel vector fields into parallel vector fields, and therefore geodesics into geodesics (together with its affine parameter). An infinitesimal affine transformation of $(M,\nabla)$ is a smooth vector field $X$ on $M$ whose local 1-parameter groups $\phi_t$ are local affine transformations. We will denote  by $\mathfrak{a}(M,\nabla)$ the real vector space of infinitesimal affine transformations of $(M,\nabla)$ and for $\mathfrak{X}(M)$ the Lie algebra of smooth vector fields on $M$. An element $X$ of $\mathfrak{X}(M)$ belongs to $ \mathfrak{a}(M,\nabla)$ if and only if it satisfies
\begin{equation}\label{Eq:ecuacionkobayashi} \mathcal{L}_X\circ \nabla_Y -\nabla_Y\circ \mathcal{L}_X=\nabla_{[X,Y]},\quad\text{for all}\quad Y\in \mathfrak{X}(M),\end{equation}
where $\mathcal{L}_X$ is the Lie derivative. For a vector field $X\in\mathfrak{X}(M)$, we will denote  by $L(X)$ its natural lift on $P$. If $\phi_t$ is the flow of $X$, the flow  of $L(X)$ is the natural lift $L(\phi_t)$ of $\phi_t$ given by $L(\phi_t)(u)=(\phi_t(m),(\phi_t)_{*,m}X_{ij}),$ where $u=(m,X_{ij})$. It is also well known that
\begin{equation} \label{Eq:condition1oninfinitesimalaffinetransformations} X\in\mathfrak{a}(M,\nabla)\text{ if and only if }\mathcal{L}_{L(X)}\omega=0. \end{equation} This is also equivalent to saying that $L(X)$ commutes with every standard horizontal vector field $B(\xi)$ (i.e., vector fields satisfying that $\theta(B(\xi)):=\xi$  for all $\xi\in\mathbb{R}^n$). We will call $\mathfrak{a}(P,\omega)$ the set of vector fields $Z$ on $P$ satisfying 
\begin{align} \notag
& Z \text{ is invariant by }R_a\text{ for every }a\in GL_n(\mathbb{R})\\ \label{Eq:infinitesimalaffinetransf}
& \mathcal{L}_Z\theta=0 \\ \notag
&\mathcal{L}_Z\omega=0,
\end{align} 
where $R_a$ is the right action of $GL_n(\mathbb{R})$ on $P$. The map $X\mapsto L(X)$ is an isomorphism of Lie algebras from $\mathfrak{a}(M,\nabla)$ to $\mathfrak{a}(P,\omega)$. The vector subspace $\mathsf{aff}(M,\nabla)$ of $\mathfrak{a}(M,\nabla)$ whose elements are complete, with the usual bracket of vector fields,  is the Lie algebra of the group $\mathsf{Aff}(M,\nabla)$ of affine transformations of $(M,\nabla)$ (see \cite{KN} chapter VI, page 229). The image of $\mathsf{aff}(M,\nabla)$ under the isomorphism $X\mapsto L(X)$ will be denoted  by $\mathfrak{a}_c(P)$. For the purposes of this work, it is useful to recall that $(P,\omega)$ admits an absolute parallelism, that is, its tangent bundle admits $n^2+n$ sections independent at every point. In fact, $\{B(e_1),\dots,B(e_n),E_{11}^*,\dots,E_{nn}^*\}$ is a paralelism of $P$, where $(e_1,\dots,e_n)$ and $\{E_{11},\dots,E_{nn}\}$ are respectively the natural basis of $\mathbb{R}^n$ and $gl_n(\mathbb{R})$ (see \cite{KN} p 122).

\begin{remark} If $X\in\mathfrak{a}(M,\nabla)$ has flow $\phi_t$, the flow  $L(\phi_t)$ of $Z=L(X)$   determines a local isomorphism of $(P,\omega).$ This implies the existence of an open covering $(U_\alpha)_{\alpha\in\mathcal{A}}$ trivializing the bundle $(L(M),M,\pi)$ such that the maps $L(\phi_t):(\pi^{-1}(U_\alpha),\omega)\longrightarrow (\pi^{-1}(\phi_t(U_\alpha)),\omega)$ are isomorphisms. When $Z$ is complete, the maps $L(\phi_t)$ are global automorphisms of $(P,\omega)$. 
	
Notice that, for every $t$, the map $L(\phi_t)$ preserves the fibration and the horizontal distribution associated to $\omega$.\end{remark}

This paper is organized as follows. In Section \ref{S:newresults} we state a characterization of flat affine manifolds and provide some of its consequences.  In the theory of flat affine manifolds appears, in a natural way, a finite dimensional associative  algebra (see Lemma \ref{L:associativity}). This will allow to show the existence of an   associative envelope of the Lie algebra of the Lie group relative to a subgroup $H$ of the group of affine transformations of the manifold. In Section \ref{S:associativeenvelope} we specialize our study to the case of flat affine Lie groups and  exhibit some examples. Section \ref{S:studyofaffinetransformationgroups} deals with the study of the following question posed by A. Medina. 

\begin{question} \label{Q:Medinasquestion} Does the group of affine transformations, $\mathsf{Aff}(M,\nabla)$, of a flat affine manifold $(M,\nabla)$ admit a left invariant flat affine or, by default, projective  structure determined by $\nabla$?\end{question}
 This question was answered positively in some particular cases  in the special case of some flat affine Lie groups (see in \cite{MSG}). In Section 4. we exhibit more cases where the answer to this question is positive. 

\section{Characterization of flat affine  manifolds} \label{S:newresults}

Before stating the main result of the section, let us denote  by  $A^*$    the fundamental vector field on $P=L(M)$ associated to $A\in\mathfrak{g}=\mathsf{gl}_n(\mathbb{R})$ and by $B(\xi)$ the basic or standard horizontal vector fields (see \cite{KN} page 63). Recall that the fundamental form  $\theta$ of $P$ is a tensorial 1-form of type $(GL(\mathbb{R}^n),\mathbb{R}^n)$, that is, it verifies $(R_{a}^*\theta)(Z)=a^{-1}(\theta(Z))$, \ for all \ $a\in\mathsf{GL}_n(\mathbb{R})$ and $Z\in\mathfrak{X}(P)$. 

If  $\Gamma$ is a linear connection over $M$, its connection  form $\omega$ is a  $gl_n(\mathbb{R})$-valued 1-form satisfying  
\begin{align}\label{Eq:omegafixesfundamentals}
&\omega(A^*)=A,\quad\text{for all} \quad A\in\mathsf{gl}_n(\mathbb{R}),\\ \label{Eq:omegaistensorial}
&R_{a}^*\omega=\mathsf{Ad}_{a^{-1}}\omega,\quad\text{for all}\quad a\in GL_n(\mathbb{R}).
\end{align}
It is well known that if $\Omega$ and $\Theta$ are respectively the curvature and torsion forms of the connection $\omega$,  they   satisfy Cartan's structure equations 
\begin{align*}
d\omega(X,Y)&=-\dfrac{1}{2}\left[\omega(X),\omega(Y)\right]+\Omega(X,Y),\\ 
d\theta(X,Y)&=-\dfrac{1}{2}\left(\omega(X)\cdot\theta(Y)-\omega(Y)\cdot\theta(X)\right)+\Theta(X,Y),
\end{align*}
for all $X,Y\in T_u(P)$, $ u\in P$ (see \cite{KN} page 75). 

Given the presheaf $U\rightarrow \mathsf{aff}(\mathbb{R}^n)$, where $U$ is an open set in $P$, consider the  sheaf $\mathcal{F}$ (called simple faisceau by the French school, see \cite{G}, see also \cite{Chr}) of base $P$ and fibre the Lie algebra $\mathsf{aff}(\mathbb{R}^n)$ generated by the presheaf so that the restriction operations are reduced to the identity.

\begin{lemma}  The simple sheaf $\mathcal{F}$ acts on $\mathbb{R}^n$ by 
\begin{equation} \label{EQ:infinitesimalaction} (u,s(u))\cdot w=(u,(v_u,f_u))\cdot w:=v_u+f_u(w) \end{equation}
Moreover, if $\eta:Sec(TP)\longrightarrow Sec(P\times \mathsf{aff}(\mathbb{R}^n))$ is a homomorphism of sheaves of Lie algebras, then $Sec(TP)$ acts  on $\mathbb{R}^n$ via $\eta$.	\end{lemma}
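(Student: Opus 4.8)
The plan is to verify that the formula in~\eqref{EQ:infinitesimalaction} actually defines an action of the simple sheaf $\mathcal{F}$ on $\mathbb{R}^n$, and then to push this through the homomorphism $\eta$. First I would unwind the structure of $\mathcal{F}$: a local section over an open set $U\subseteq P$ assigns to each $u\in U$ an element $s(u)=(v_u,f_u)\in\mathsf{aff}(\mathbb{R}^n)=\mathbb{R}^n\rtimes\mathsf{gl}_n(\mathbb{R})$, so that the stalk at $u$ is just $\mathsf{aff}(\mathbb{R}^n)$ and the bracket of two sections $s_1,s_2$ is computed pointwise using the bracket of $\mathsf{aff}(\mathbb{R}^n)$, namely
\begin{equation*}
[(v_1,f_1),(v_2,f_2)]=(f_1(v_2)-f_2(v_1),[f_1,f_2]).
\end{equation*}
The claim that $\mathcal{F}$ acts on $\mathbb{R}^n$ then amounts to checking that, for each fixed $u$, the assignment $(v_u,f_u)\mapsto (w\mapsto v_u+f_u(w))$ is a homomorphism of Lie algebras from $\mathsf{aff}(\mathbb{R}^n)$ into the Lie algebra of (affine, hence smooth) vector fields on $\mathbb{R}^n$ — i.e.\ into $\mathfrak{X}(\mathbb{R}^n)$ with the usual bracket of vector fields.

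The key computational step is therefore the following: to the affine datum $(v,f)$ associate the vector field $\Xi_{(v,f)}$ on $\mathbb{R}^n$ defined by $\Xi_{(v,f)}(w)=v+f(w)$, regarded as an element of $T_w\mathbb{R}^n\cong\mathbb{R}^n$, and verify
\begin{equation*}
\bigl[\Xi_{(v_1,f_1)},\Xi_{(v_2,f_2)}\bigr]=-\,\Xi_{[(v_1,f_1),(v_2,f_2)]}=\Xi_{(f_2(v_1)-f_1(v_2),\,[f_2,f_1])}.
\end{equation*}
This is the standard (and essentially classical) fact that the affine Lie algebra $\mathsf{aff}(\mathbb{R}^n)$ is anti-isomorphic (or isomorphic, depending on the bracket convention one fixes for vector fields) to the Lie algebra of affine vector fields on $\mathbb{R}^n$; it is a one-line Leibniz-rule computation once one writes $\Xi_{(v,f)}=\sum_i (v^i + \sum_j f^i_j w^j)\,\partial_{w^i}$. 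I would also note that the $R_a$-equivariance properties recorded in~\eqref{Eq:omegaistensorial} and the tensoriality of $\theta$ are what make this pointwise construction patch into a well-defined action of the \emph{sheaf} $\mathcal{F}$ rather than merely a collection of unrelated stalkwise actions, but since the restriction maps of $\mathcal{F}$ are the identity by construction, the sheaf compatibility is automatic and nothing further needs checking there.

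For the second assertion, I would simply compose: given the homomorphism of sheaves of Lie algebras $\eta:Sec(TP)\longrightarrow Sec(P\times\mathsf{aff}(\mathbb{R}^n))$, each local vector field $Z$ on $P$ is sent to a local section $\eta(Z)$ of $P\times\mathsf{aff}(\mathbb{R}^n)$, which by the first part of the lemma acts on $\mathbb{R}^n$; the assignment $Z\mapsto\bigl(w\mapsto \eta(Z)(\cdot)\cdot w\bigr)$ is then a composite of two Lie algebra homomorphisms and hence a Lie algebra homomorphism, which is exactly what it means for $Sec(TP)$ to act on $\mathbb{R}^n$ via $\eta$. The only point requiring a little care — and the step I expect to be the main obstacle — is bookkeeping the bracket conventions: the sign/ordering in the bracket of affine vector fields versus the abstract bracket of $\mathsf{aff}(\mathbb{R}^n)$ must be fixed consistently with whatever convention $\eta$ respects, so that ``homomorphism'' does not secretly become ``anti-homomorphism.'' Once the convention is pinned down (and, if necessary, absorbed by replacing $(v,f)\mapsto\Xi_{(v,f)}$ with its negative), the rest is routine.
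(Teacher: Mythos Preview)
Your approach is essentially the same as the paper's: both reduce the sheaf action to the standard infinitesimal action of $\mathsf{aff}(\mathbb{R}^n)$ on $\mathbb{R}^n$ via $w\mapsto v+f(w)$ and then compose with $\eta$ for the second assertion. The paper's proof puts its weight on the sheaf-theoretic observation that sections of the simple sheaf are locally constant (so the formula is locally independent of $u$) and simply asserts that this gives an infinitesimal action, whereas you spell out the bracket verification and flag the sign convention; the two emphases are complementary and together constitute the full argument.
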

\begin{proof} Recall the construction of the \'etale space corresponding to $\mathcal{F}$. As $\mathcal{F}(u)$ is a direct limit we have $\mathcal{F}(u)=\mathsf{aff}(\mathbb{R}^n)$ for any $u\in P$.

Moreover the sections over $U$ considered as maps from $U$ to $\mathsf{aff}(\mathbb{R}^n)$ are locally constant. Hence the topology of $\mathcal{F}=P\times \mathsf{aff}(\mathbb{R}^n)$ is the product topology of that of $P$ by the discrete topology of $\mathsf{aff}(\mathbb{R}^n)$.

As a consequence, Equation \eqref{EQ:infinitesimalaction} can be written locally as 
\[ s\cdot w=(v,f)\cdot w=v+f(w). \] 
This  gives an infinitesimal action of  $\mathcal{F}$ on $\mathbb{R}^n$.

The last assertion easily follows.
\end{proof}

As a consequence the Lie algebras $Sec(TP),$ $\mathfrak{a}(P)$ and $\mathfrak{a}_c(P)$ also act infinitesimally on $\mathbb{R}^n$.

In these terms we have the following result.

\begin{theorem} \label{T:infinitesimalversion} Let $M$ be a smooth connected manifold, $P=L(M)$ be the principal bundle of linear frames of the manifold, $\theta$ the fundamental form of $P$ and $\Gamma$ a linear connection on $P$ of connection form $\omega$. The following assertions are equivalent
\begin{enumerate}[(i)]
\item The linear connection $\Gamma$ on $P$  is flat affine.
\item The map
$$
\begin{array}{ccc}
\eta:Sec(TP)&\longrightarrow &Sec(P\times\mathsf{aff}(\mathbb{R}^n))\\	Z  &\mapsto         & (\theta(Z),\omega(Z))
\end{array},	$$
is a homomorphisms of sheaves of real Lie algebras.
\end{enumerate} 
\end{theorem}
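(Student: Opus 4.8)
The plan is to verify directly that $\eta(Z) = (\theta(Z), \omega(Z))$ is a morphism of sheaves of Lie algebras if and only if the curvature and torsion of $\Gamma$ vanish, by computing the bracket in $\mathsf{aff}(\mathbb{R}^n)$ of two image sections and comparing it with $\eta([Z,W])$. Recall that the bracket on $\mathsf{aff}(\mathbb{R}^n) = \mathbb{R}^n \rtimes \mathsf{gl}_n(\mathbb{R})$ is $[(v,A),(w,B)] = (A w - B v,\, [A,B])$. So for $Z, W \in Sec(TP)$ the statement $\eta([Z,W]) = [\eta(Z),\eta(W)]$ unpacks into the pair of equations
\begin{align*}
\theta([Z,W]) &= \omega(Z)\cdot\theta(W) - \omega(W)\cdot\theta(Z),\\
\omega([Z,W]) &= [\omega(Z),\omega(W)].
\end{align*}
The natural tool is Cartan's structure equations together with the identity $d\alpha(Z,W) = Z(\alpha(W)) - W(\alpha(Z)) - \alpha([Z,W])$ for a $1$-form $\alpha$. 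Since $Z\mapsto \eta(Z)$ being a sheaf morphism means it is $\mathbb{R}$-linear and bracket-preserving fibrewise (linearity over functions is not required, as sections of $P\times\mathsf{aff}(\mathbb{R}^n)$ form a sheaf of Lie algebras over the constant presheaf), the content is precisely the bracket condition.

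First I would substitute the structure equations into the two target equations. For the $\omega$-component: by the identity above, $\omega([Z,W]) = Z(\omega(W)) - W(\omega(Z)) - d\omega(Z,W)$, and $d\omega(Z,W) = -\tfrac12[\omega(Z),\omega(W)] + \Omega(Z,W)$ after accounting for the factor conventions — more precisely, applying the structure equation as stated gives $\omega([Z,W]) = Z(\omega(W)) - W(\omega(Z)) + [\omega(Z),\omega(W)] - \Omega(Z,W)$ once the $\tfrac12$ and the definition of $d\omega$ on vector fields are reconciled. The key observation is that the ``extra'' terms $Z(\omega(W)) - W(\omega(Z))$ must be handled: here one uses that $\omega(Z), \omega(W)$ are to be read as the $\mathsf{gl}_n(\mathbb{R})$-valued functions and the bracket in the target algebra is the pointwise (constant-coefficient) bracket, so after unwinding the étale-space description from the Lemma — where sections are locally constant — these derivative terms do not contribute to the fibrewise bracket computation. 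The cleanest route is in fact to test the identity on the global parallelism $\{B(e_1),\dots,B(e_n), E_{11}^*,\dots,E_{nn}^*\}$ of $P$: since $\eta$ is $\mathbb{R}$-linear, it suffices to check the bracket condition on all pairs of these $n^2+n$ vector fields.

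So the core computation reduces to evaluating, for $\xi,\zeta \in \mathbb{R}^n$ and $A, B \in \mathsf{gl}_n(\mathbb{R})$:
\begin{itemize}
\item $[B(\xi), B(\zeta)]$: one has $\theta([B(\xi),B(\zeta)]) = -2\Theta(B(\xi),B(\zeta))$ and $\omega([B(\xi),B(\zeta)]) = -2\Omega(B(\xi),B(\zeta))$ (from the structure equations, using $\theta(B(\xi))=\xi$ constant, $\omega(B(\xi))=0$), while the target bracket $[\eta(B(\xi)),\eta(B(\zeta))] = [(\xi,0),(\zeta,0)] = (0,0)$;
\item $[A^*, B(\xi)] = B(A\xi)$ (a standard identity, \cite{KN} p.\ 65), so $\eta([A^*,B(\xi)]) = (A\xi, 0)$, while $[\eta(A^*),\eta(B(\xi))] = [(0,A),(\xi,0)] = (A\xi, 0)$ — this pair always matches;
\item $[A^*,B^*] = [A,B]^*$, so $\eta([A^*,B^*]) = (0,[A,B]) = [\eta(A^*),\eta(B^*)]$ — this pair always matches.
\end{itemize}
Hence the only pairs that constrain anything are the $B(\xi), B(\zeta)$ pairs, and $\eta$ is a sheaf morphism of Lie algebras if and only if $\Theta(B(\xi),B(\zeta)) = 0$ and $\Omega(B(\xi),B(\zeta)) = 0$ for all $\xi,\zeta$. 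Finally I would invoke the standard fact that $\Omega$ and $\Theta$, being tensorial (horizontal) forms, are determined by their values on horizontal vectors, and the $B(\xi)$ span the horizontal subspace at each point; therefore these vanishing conditions are equivalent to $\Omega \equiv 0$ and $\Theta \equiv 0$, i.e.\ to $\Gamma$ being flat affine. That $\eta$ genuinely lands in sections (is $R_a$-compatible / well-defined on the sheaf) follows from the tensorial transformation laws $R_a^*\theta = a^{-1}\theta$ and $R_a^*\omega = \mathsf{Ad}_{a^{-1}}\omega$ noted before the statement.

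The main obstacle I anticipate is purely bookkeeping: reconciling the factor of $\tfrac12$ in the paper's form of Cartan's equations with the convention $d\alpha(Z,W) = Z\alpha(W) - W\alpha(Z) - \alpha([Z,W])$, and making sure the ``derivative terms'' $Z(\alpha(W))$ are correctly seen to be absorbed — this is exactly why evaluating on the parallelism $\{B(e_i), E_{jj}^*\}$, where $\theta$ and $\omega$ take constant or simple values, is the safest path and sidesteps the need to track those terms at all.
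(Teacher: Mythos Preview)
Your approach is essentially the paper's: unpack the bracket condition on $\eta$ into the two equations on $\theta$ and $\omega$, then verify them case by case on the vertical/horizontal decomposition using the structure equations together with the identities $[A^*,B(\xi)]=B(A\xi)$ and $[A^*,B^*]=[A,B]^*$. The only organizational difference is that you handle both implications at once by exhibiting the $B(\xi)$--$B(\zeta)$ pairs as the sole source of constraints (namely $\Theta=\Omega=0$ on horizontal vectors, hence everywhere by tensoriality), whereas the paper argues (i)$\Rightarrow$(ii) and (ii)$\Rightarrow$(i) separately and, for the latter, extracts $\Omega=0$ via a Frobenius-type step (reading $\omega([Z_1,Z_2])=[\omega(Z_1),\omega(Z_2)]$ as integrability of the horizontal distribution) before computing $\Theta$ directly on basic fields. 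Both proofs reduce the horizontal--horizontal case to basic fields $B(\xi)$ without further comment, so your concern about the ``derivative terms'' is resolved in exactly the same way the paper resolves it: by evaluating on fields for which $\theta$ and $\omega$ take constant values.
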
\begin{proof} As $\mathsf{aff}(\mathbb{R}^n)=\mathbb{R}^n\rtimes_{id}\mathsf{gl}(\mathbb{R}^n)$ is the semidirect product of the abelian Lie algebra $\mathbb{R}^n$ and the Lie algebra of commutators of linear endomorphisms of $\mathbb{R}^n$, the map 
$$
\begin{array}{rccl}
\eta:& Sec(TP)&\longrightarrow&Sec(P\times\mathsf{aff}(\mathbb{R}^n))\\
&           Z            &\mapsto    &(\theta(Z),\omega(Z))
\end{array}
$$
is well defined and $\mathbb{R}$-linear.

That $\eta$ is a homomorphism of sheaves of Lie algebras means that
\begin{equation} \label{Eq:homomorphismeta} \eta([Z_1,Z_2])=(\omega(Z_1)\cdot\theta(Z_2)-\omega(Z_2)\cdot\theta(Z_1),[\omega(Z_1),\omega(Z_2)]),\text{ for all }Z_1,Z_2\in \text{Sec}(TP). \end{equation}
In other words, $\omega$ is a linear representation of the Lie algebra $Sec(TP)$ and  $\theta$ is a 1-cocycle relative to this representation.

Suppose that $\Gamma$ is flat and torsion free, then the structure equations for $\omega$ reduce to
\begin{align}\label{Eq:reducedestructureequation1} d\theta(Z_1,Z_2)&=
-\frac{1}{2}(\omega(Z_1)\cdot\theta(Z_2)-\omega(Z_2)\cdot\theta(Z_1))\\ \label{Eq:reducedestructureequation2}
d\omega(Z_1,Z_2)&=
	-\frac{1}{2}[\omega(Z_1),\omega(Z_2)]. \end{align}
It is clear that  \eqref{Eq:homomorphismeta} holds when both vector fields are vertical or if one is vertical and  the other is horizontal.

Now let us suppose that both $Z_1$ and $Z_2$ are horizontal vector fields. As the distribution determined by $\omega$ is integrable, it follows that $[Z_1,Z_2]$ is horizontal and  Equation \eqref{Eq:reducedestructureequation2} implies that $\omega([Z_1,Z_2])=[\omega(Z_1),\omega(Z_2)]$. Also notice that $\omega(Z_1)\cdot\theta(Z_2)-\omega(Z_2)\cdot\theta(Z_2)=0.$ Now, we can suppose that $Z_1$ and $Z_2$ are basic vector fields, that is, $Z_1=B(\xi_1)$ and $Z_2=B(\xi_2)$. Hence  from \eqref{Eq:reducedestructureequation1} we get  that $\theta([Z_1,Z_2])=0$. Therefore assertion (ii) follows from (i).
 
%\begin{align*}
%[\eta(Z_1),\eta(Z_2)]&=[(\theta(Z_1),\omega(Z_1),(\theta(Z_2),\omega(Z_2))]\\
%&=[(\theta(Z_1),0),(\theta(Z_2),0)]\\	&=0.
%\end{align*}
%On the other hand, since the torsion and curvature forms, $\Theta$ and $\Omega$, vanish everywhere,  the structure equations are (locally) given by $\omega([B(\xi_1),B(\xi_2)])=0$ and $\theta([B(\xi_1),B(\xi_2)])=0,$ (that is, 
%$\eta([Z_1,Z_2])=0$).  These equations mean that the set $\{B(e_1),\dots,B(e_n)\}$ determine a local horizontal submanifold of $P$ relative to $\omega$ and that this paralelism is commutative .

To prove that (ii) implies (i), we will show that  Equation \eqref{Eq:homomorphismeta} implies that the curvature and the torsion vanish, i.e., Equations \eqref{Eq:reducedestructureequation1} and \eqref{Eq:reducedestructureequation2} hold.

As $\eta$ is a homomorphism of sheaves of Lie algebras it follows that for $Z_1, Z_2\in Sec(TP)$ 
	\[ \omega([Z_1,Z_2])=[\omega(Z_1),\omega(Z_2)]\quad\text{ and}\quad \theta([Z_1,Z_2])=\omega(Z_1)\cdot\theta(Z_2)-\omega(Z_2)\cdot\theta(Z_1)\] 
	
The first equality means that the local horizontal distribution on $P$ determined by $\omega$ is  completely integrable, hence  $\Omega$ vanishes.

On the other hand, as the torsion $\Theta$ is a tensorial 2-form on $P$ of type $(GL_n(\mathbb{R}),\mathbb{R}^n)$, we have $\Theta(Z_1,Z_2)=0$ if $Z_1$ or $Z_2$ is vertical. Now,  if  $Z_1=B(\xi_1)$ and $Z_2=B(\xi_2)$  for some $\xi_1,\xi_2\in\mathbb{R}^n$, we obtain
\begin{eqnarray*}
\Theta(Z_1,Z_2) & = & d\theta(Z_1,Z_2)\\ & = & \frac{1}{2}\left(Z_1(\theta(Z_2))-Z_2(\theta(Z_1))-\theta([Z_1,Z_2])\right)\\ & = & \frac{1}{2}\left(Z_1(\xi_2)-Z_2(\xi_1)-\omega(Z_1)\cdot\theta(Z_2)+\omega(Z_2)\cdot\theta(Z_1)\right)\\ & = & 0.
\end{eqnarray*}
Consequently the torsion form $\Theta$ vanishes. 	
%Finally, since the connection is flat, i.e., $\omega([Z_1,Z_2])=[\omega(Z_1),\omega(Z_2)]$ for all $Z_1, Z_2\in Sec(TP)$, the bundle $P\xrightarrow{\pi} M$ has discrete structure group, that is, the transition functions are locally constant. Consequently $\Theta=0$ globally. 
\end{proof}

Recall that there are topological obstructions to the existence of a  flat affine connection. In particular, a closed manifold with finite fundamental group does not admit flat affine structures (see \cite{Aus}, \cite{JM} and \cite{S}).

As $P=L(M)$ has a natural parallelism, the group $K$ of transformations of this parallelism is a Lie group of dimension at most  $\dim P=n^2+n$. More precisely, given $\sigma\in K$ and  any $u\in P$, the map $\sigma\mapsto \sigma(u)$ is injective and its image $\{\sigma(u)\mid \sigma\in K\}$ is a closed submanifold of $P$. The submanifold structure of $\{\sigma(u)\mid \sigma\in K\}$ turns $K$  into a Lie transformation group of $P$ (see \cite{SK}).  The group $Aut(P,\omega)$, of diffeomorphisms of $P$ preserving $\omega$ (therefore $\theta$), is a  closed Lie subgroup of $K$, so it is a Lie group of transformations of $P$.

 %The Lie algebra of $K$ will be denoted by $\mathfrak{K}$ and it can be viewed as a submodule of $Sec(TP)$. 

\begin{remark} \label{R:affineatlas}
The group $Aut(L(\mathbb{R}^n),\omega^0)$  of diffeomorphisms of $L(\mathbb{R}^n)$ preserving the usual connection $\omega^0$ on $\mathbb{R}^n$ is isomorphic to the  group $\mathsf{Aff}(\mathbb{R}^n,\nabla^0)$, called the classic affine group, that is, the group generated by translations and linear automorphisms. Recall also that  having a flat affine structure $\nabla$ on $M$ is equivalent to have a smooth atlas whose change of coordinates are elements of $Aut(L(\mathbb{R}^n),\omega^0)$.	
\end{remark}

The following technical result is a direct  consequence of Theorem \ref{T:affineetalerepresentationofmanifolds}, it  concerns certain $G$-structures to which a connection can be attached. 
\begin{corollary} Let $\Gamma$ be flat affine  and  $\eta$ the homomorphism of  Theorem \ref{T:infinitesimalversion}, then we have
	\begin{enumerate}[a.]
\item The connection $\Gamma$ is a metric connection, if and only if  $\eta$  takes values in the Lie algebra $e_{(p,q)}(n)=\mathbb{R}^n\rtimes o_{(p,q)}$.
\item  The connection $\Gamma$ preserves a volume form if and only if  $\eta$  takes values in the Lie algebra $\mathbb{R}^n\rtimes sl_n(\mathbb{R})$. 
\noindent 
In particular, if $n=2m$, $\Gamma$ preserves a symplectic form  $\sigma$ if and only if   $\eta$  takes values in the Lie algebra $\mathbb{R}^{n}\rtimes sp_{m}(\mathbb{R})$.\end{enumerate}
\end{corollary}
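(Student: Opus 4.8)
The plan is to reduce each of the three statements to a pointwise condition on the image of $\eta$ and then to recognize that condition as the defining linear-algebraic condition for the relevant subalgebra of $\mathsf{aff}(\mathbb{R}^n)$. Recall from Theorem~\ref{T:infinitesimalversion} that when $\Gamma$ is flat affine, $\eta(Z)=(\theta(Z),\omega(Z))$ is a homomorphism of sheaves of Lie algebras into $P\times\mathsf{aff}(\mathbb{R}^n)$, and the value $\omega(Z)$ records the $\mathsf{gl}_n(\mathbb{R})$-part. The statement that $\eta$ ``takes values'' in a sub-Lie-algebra $\mathbb{R}^n\rtimes\mathfrak{h}$ of $\mathsf{aff}(\mathbb{R}^n)$ should be read fiberwise: for every $u\in P$ and every $Z$, $\omega_u(Z)\in\mathfrak{h}\subset\mathsf{gl}_n(\mathbb{R})$; since $\theta$ already takes all of $\mathbb{R}^n$ as values on vertical-free directions, the only real constraint is on $\omega$. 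First I would make that equivalence precise, observing that the holonomy bundle (or equivalently the $G$-structure defined by the chosen tensor) is exactly the locus where $\omega$ has values in $\mathfrak{h}$.

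For part~(a), I would use the standard correspondence between metric connections and reductions of $L(M)$ to the orthogonal bundle $O_{(p,q)}(M)$: $\Gamma$ is a metric connection for a pseudo-Riemannian metric of signature $(p,q)$ if and only if $\Gamma$ is reducible to a connection on an $O_{(p,q)}$-subbundle, which is equivalent to saying that, on that subbundle, $\omega$ is $o_{(p,q)}$-valued (see \cite{KN}, Chapter~IV). Combining this with the description of $\eta$, the connection-form part lands in $o_{(p,q)}$ precisely when the translation-plus-linear data lies in $e_{(p,q)}(n)=\mathbb{R}^n\rtimes o_{(p,q)}$. The converse direction is immediate: if $\omega$ is $o_{(p,q)}$-valued along a subbundle, parallel transport preserves the corresponding bilinear form, so $\nabla$ is metric. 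Part~(b) follows the same template with $O_{(p,q)}$ replaced by $SL_n(\mathbb{R})$: a connection preserves a volume form $\nu$ exactly when it reduces to the $SL_n$-bundle of $\nu$-unimodular frames, equivalently when $\omega$ is $sl_n(\mathbb{R})$-valued there; in terms of $\eta$ this is the statement that $\eta$ takes values in $\mathbb{R}^n\rtimes sl_n(\mathbb{R})$. The symplectic refinement when $n=2m$ is identical, using that preserving a symplectic form $\sigma$ is reduction to $Sp_m(\mathbb{R})$ and that $\omega$ being $sp_m(\mathbb{R})$-valued is equivalent to $\eta$ landing in $\mathbb{R}^{n}\rtimes sp_m(\mathbb{R})$.

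The main obstacle, such as it is, is bookkeeping rather than depth: one must be careful that the ``takes values in'' phrasing is interpreted relative to the appropriate reduced subbundle (a frame bundle adapted to the metric, volume, or symplectic form) rather than all of $L(M)$, since $\omega$ on the full frame bundle is of course $\mathsf{gl}_n(\mathbb{R})$-valued; equivalently one fixes, at each point, an adapted frame and checks the condition there, then propagates it by $\mathsf{Ad}$-equivariance \eqref{Eq:omegaistensorial} of $\omega$. I would also invoke the cited Theorem~\ref{T:affineetalerepresentationofmanifolds} exactly as the corollary statement indicates: it packages the local étale affine representation produced from $(\theta,\omega)$, and the three tensor-preservation conditions correspond to that representation factoring through the stabilizer in $\mathsf{aff}(\mathbb{R}^n)$ of the standard Euclidean/volume/symplectic structure on $\mathbb{R}^n$, whose Lie algebras are precisely $e_{(p,q)}(n)$, $\mathbb{R}^n\rtimes sl_n(\mathbb{R})$, and $\mathbb{R}^{n}\rtimes sp_m(\mathbb{R})$. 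Once the dictionary between ``$\Gamma$ preserves the tensor'', ``$\Gamma$ reduces to the corresponding $G$-structure'', and ``$\omega$ is $\mathfrak{h}$-valued on that reduction'' is stated, each equivalence is a one-line consequence, so no long computation is needed.
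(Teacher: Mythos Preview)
Your proposal is correct and follows essentially the same approach as the paper: both reduce each case to the standard correspondence between a connection preserving a tensor and the reduction of $L(M)$ to the appropriate $G$-structure ($O_{(p,q)}$, $SL_n(\mathbb{R})$, $Sp_m(\mathbb{R})$), on which $\omega$ becomes $\mathfrak{h}$-valued. The paper's proof is a two-sentence sketch of exactly this, while you spell out the dictionary more carefully and flag the point that ``takes values in'' must be read on the reduced subbundle; that caveat is implicit in the paper's phrase ``whenever exists'' but not made explicit there.
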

\begin{proof} Consider the $G$-structure on $M$, whenever exists,  given  respectively by $O_{(p,q)}$ (with $p+q=n$), $SL_n(\mathbb{R})$ and $Sp_{m}(\mathbb{R})$. In each respective case, the homomorphism $\eta$ takes values in the sheaf of Lie algebras $Sec(P\times (\mathbb{R}^n\rtimes o_{(p,q)}))$, $Sec(P\times (\mathbb{R}^n\rtimes sl_n(\mathbb{R}))$ and  $Sec(P\times (\mathbb{R}^n\rtimes sp_{m}(\mathbb{R}))$, respectively. 
\end{proof}

\begin{remark} 
If $\Gamma$ is flat affine and $X\in\mathfrak{a}(M,\nabla)$ with flow $\phi_t$, then its natural lift $L(\phi_t)$ preserves the foliation defined by $\omega$. Moreover, if $L(X)$ is complete then $L(\phi_t)$ is an automorpism of the bi-foliated manifold $L(M)$. 
%and $Z=L(X)$ with $X\in\mathfrak{a}(M,\nabla)$, the local isomorphism $L(\phi_t)$ preserves the foliation defined by $\omega$, where $\phi_t$ is the flow of $X$. That is, the natural lift $L(\phi_t)$ of $\phi_t$ is a local isomorphism of $(P,\omega)$. If $Z\in \mathfrak{a}_c(P,\omega)$ the natural lift $L(\phi_t)$ is an automorpism of the bi-foliated manifold $L(M)$. 
\end{remark}

The groups $Aut(P,\omega)$ and  $\mathsf{Aff}(M,\nabla)$ are  isomorphic. We denote by  $Aut(P,\omega)_0$ the unit component of $Aut(P,\omega)$ and by $\widehat{Aut}(P,\omega)_0$ its universal covering Lie group.

\begin{example} Identify $\mathbb{C}$ with the plane $\mathbb{R}^2$ and $\mathbb{C}^*$ with the punctured plane $\mathbb{R}^2\setminus\{(0,0)\}$ and let $p:\mathbb{R}^2\longrightarrow\mathbb{R}^2\setminus\{(0,0)\}$ be the complex exponential map. It is clear that $p$ is a covering map with automorphism group given by $H=\{F_k:\mathbb{R}^2\longrightarrow \mathbb{R}^2\mid F_k(x,y)=(x,y+2\pi k)\ \text{with}\ k\in\mathbb{Z} \}$. As $H$ is a subgroup of affine transformations of $\mathbb{R}^2$ relative to $\nabla^0$,  there exists a unique linear connection $\nabla$ on $M:=\mathbb{R}^2\setminus\{(0,0)\}$ so that $p$ is an affine map. Consequently  the $\sf{Aff}(\mathbb{R}^2)-$principal fiber bundle of affine frames $A(M)$ has a connection $\Gamma'$, of curvature zero, determined by $\nabla$. The geodesic curves in $M$ determined by $\nabla$ are logarithmic spirals. The elements of the group $\mathsf{Aff}(M,\nabla)$, of affine transformations of $M$ relative to $\nabla$, are diffeomorphisms of $M$ locally given by $F(r,\theta)=(ar^b,c\ln(r)+\theta+d )$ where $a,b,c,d\in \mathbb{R}$ with $a>0$ and  $(r,\theta)$ are polar coordinates of $M$. 	
	
Let $u_0'$ be in $A(M)$ and $H(u_0')$ be the affine holonomy bundle through $u_0'$. Let us denote by $h$  affine holonomy representation of $(M,\nabla)$. It is well known that $H(u_0')$ is a covering manifold of $M$ with structure group $h(\pi_1(M))$ and that $A(M)$  is the disjoint union of holonomy bundles. Let $p':H(u_0')\longrightarrow M$ be the covering map and $\widetilde{F}$ the natural affine lift   of $F\in \mathsf{Aff}(M,\nabla)$ to $A(M)$, i.e., $\widetilde{F}((q,X_1,X_2)_{(x,y)})=((F_{*,(x,y)}(q),F_{*,(x,y)}(X_1),F_{*,(x,y)}(X_2))_{F(x,y)})$, with $F_{*,(x,y)}(q)$  the end point of the vector $F_{*,(x,y)}(\overrightarrow{q})$ and $\overrightarrow{q}$ the vector on $T_{(x,y)}$ from the origin to $q$. It can be verified that $\widetilde{F}$ sends the affine holonomy bundle $H(u_0')$ through $u_0'$  to the holonomy bundle $H(\widetilde{F}(u_0'))$ through $\widetilde{F}(u_0')$. 
\end{example}

Recall that $\omega$ is flat affine if and only if its affine holonomy group is discrete (see \cite{KN} p 210). Nevertheless, there are flat affine manifolds whose linear holonomy group is not discrete.

%\textcolor{red}{\begin{remark} \label{R:restrcitionofeta} Notice that a local expression of the homomorphism $\rho$ of the previous proposition can be obtained by integration of the the homomorphism $\eta$ restricted to $\mathfrak{a}_c(P)$. That is, by integration of the map $\eta':\mathfrak{a}_c(P)\longrightarrow \mathsf{aff}(\mathbb{R}^n)$.
%\end{remark}}

The Lie algebra homomorphism $\eta$ of Theorem \ref{T:infinitesimalversion} could be integrated, in some particular cases, into a Lie group homomorphism. For instance,  applying Lie's third Theorem (see \cite{Che} for details on the proof of this theorem) and Cartan's Theorem to the Lie algebra homomorphism $\eta':\mathfrak{a}_c(P)\longrightarrow \mathsf{aff}(\mathbb{R}^n)$, obtained from $\eta$ restricted to $\mathfrak{a}_c(P)$, we have.

\begin{corollary} \label{C:integrationofeta} Let $(M,\nabla)$ be a flat affine connected manifold    of connection form $\omega$. Then there exists a Lie group homomorphism $\rho:\widehat{Aut}(P,\omega)_0\longrightarrow Aut(L(\mathbb{R}^n),\omega^0)$ determined by $(\theta,\omega)$.
\end{corollary}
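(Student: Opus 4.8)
The plan is to integrate the Lie algebra homomorphism $\eta':\mathfrak{a}_c(P)\to\mathsf{aff}(\mathbb{R}^n)$ obtained by restricting $\eta$ from Theorem \ref{T:infinitesimalversion}. First I would recall that $\mathfrak{a}_c(P)$ is, by construction, the image of $\mathsf{aff}(M,\nabla)$ under the Lie algebra isomorphism $X\mapsto L(X)$, and hence is precisely the Lie algebra of the group $\mathsf{Aff}(M,\nabla)$, which is isomorphic to $Aut(P,\omega)$; since $Aut(P,\omega)_0$ is connected, $\mathfrak{a}_c(P)$ is also the Lie algebra of $Aut(P,\omega)_0$ and therefore of its universal cover $\widehat{Aut}(P,\omega)_0$. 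On the target side, by Remark \ref{R:affineatlas} the group $Aut(L(\mathbb{R}^n),\omega^0)$ is isomorphic to the classical affine group $\mathsf{Aff}(\mathbb{R}^n,\nabla^0)=\mathbb{R}^n\rtimes GL_n(\mathbb{R})$, whose Lie algebra is exactly $\mathsf{aff}(\mathbb{R}^n)=\mathbb{R}^n\rtimes\mathsf{gl}_n(\mathbb{R})$. Thus $\eta'$ is a homomorphism between the Lie algebras of two finite-dimensional Lie groups.

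The next step is the integration itself. Because $\widehat{Aut}(P,\omega)_0$ is, by definition, connected and simply connected, the standard Lie-group/Lie-algebra correspondence — this is the ``Cartan'' or ``Lie II'' part, available once Lie's third theorem guarantees the relevant groups exist and once we know the source is simply connected — yields a unique Lie group homomorphism $\rho:\widehat{Aut}(P,\omega)_0\to Aut(L(\mathbb{R}^n),\omega^0)$ with differential $\eta'$ at the identity. Concretely, one may describe $\rho$ by following curves: for $g\in\widehat{Aut}(P,\omega)_0$ choose a path from the identity to $g$, project it via $\eta'$ to a path of right-invariant vector fields on $Aut(L(\mathbb{R}^n),\omega^0)$, and integrate; simple connectedness of the source makes the result independent of the chosen path. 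Since $\eta'$ is built solely from $\theta$ and $\omega$ (it is $Z\mapsto(\theta(Z),\omega(Z))$), the resulting $\rho$ is ``determined by $(\theta,\omega)$'' in the sense claimed.

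The main obstacle — and the only genuinely non-formal point — is verifying that the hypotheses for integration are actually met, namely that $\widehat{Aut}(P,\omega)_0$ is a genuine (finite-dimensional) Lie group and that $\mathfrak{a}_c(P)$ is finite-dimensional, so that Lie's third theorem applies to both source and target and $\eta'$ is a morphism of finite-dimensional Lie algebras. This is precisely where I would invoke the earlier structural facts: $P=L(M)$ carries a natural parallelism, so $Aut(P,\omega)$ is a closed Lie subgroup of the (finite-dimensional) automorphism group $K$ of that parallelism, with $\dim Aut(P,\omega)\le n^2+n$; consequently $\mathfrak{a}_c(P)$ is finite-dimensional and the universal cover $\widehat{Aut}(P,\omega)_0$ is again a finite-dimensional, connected, simply connected Lie group. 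Once these finiteness and simple-connectedness facts are in hand, the existence and uniqueness of $\rho$ is the classical functoriality of the Lie functor, and there is nothing further to check. I would close by noting that flatness of $\nabla$ is used exactly through Theorem \ref{T:infinitesimalversion}, which is what makes $\eta$ (hence $\eta'$) a Lie algebra homomorphism in the first place; without it the restriction would not even be a morphism, and no integration could be attempted.
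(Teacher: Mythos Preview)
Your proposal is correct and follows essentially the same route as the paper: the paper's argument is just the one-line observation preceding the corollary, namely that one applies Lie's third theorem and Cartan's theorem to the Lie algebra homomorphism $\eta'=\eta|_{\mathfrak{a}_c(P)}$, and you have simply spelled out the details of that application. One small point worth sharpening: to land in $\mathsf{aff}(\mathbb{R}^n)$ rather than in sections of $P\times\mathsf{aff}(\mathbb{R}^n)$, one fixes a frame $u\in P$ and sets $\eta'(Z)=(\theta_u(Z_u),\omega_u(Z_u))$; the paper makes this explicit only later, in the proof of Theorem~\ref{T:affineetalerepresentationofmanifolds}.
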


\begin{lemma}\label{L:surjectivetheta} Let $M$ be a smooth connected manifold and $\Gamma$ a flat affine connection on $P=L(M)$ with $\dim(\mathfrak{a}_c(P))\geq\dim(M)$. Then there exists $u\in P$ so that the map $\phi_u:\mathfrak{a}_c(P)\longrightarrow \mathbb{R}^n$ defined by $\phi_u(Z)=\eta(0)(Z)=\theta_u(Z_u)$ is onto.
\end{lemma}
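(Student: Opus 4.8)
The plan is to move the statement off the frame bundle: first down to $M$, then, using Corollary \ref{C:integrationofeta}, over to $\mathbb{R}^n$, where it becomes an assertion about orbits of a group of affine motions. Write $n=\dim M$ and $d=\dim\mathfrak{a}_c(P)=\dim\mathsf{aff}(M,\nabla)\ge n$. \emph{Step 1 (reduction to the base).} For $Z=L(X)$ with $X\in\mathsf{aff}(M,\nabla)$ and $m=\pi(u)$, the soldering property of $\theta$ and the fact that $L(X)$ projects onto $X$ give $\phi_u(Z)=\theta_u(L(X)_u)=u^{-1}(d\pi_u(L(X)_u))=u^{-1}(X_m)$. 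Since $u:\mathbb{R}^n\to T_mM$ is a linear isomorphism, $\phi_u$ is onto if and only if $\{X_m : X\in\mathsf{aff}(M,\nabla)\}=T_mM$; in particular this depends only on $m$, not on the chosen frame over it. So it suffices to produce a single point of $M$ at which the complete infinitesimal affine transformations span the tangent space, and the lemma is equivalent to $\max_{m}\dim\{X_m:X\in\mathsf{aff}(M,\nabla)\}=n$.

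\emph{Step 2 (an involutive distribution, transferred to $\mathbb{R}^n$).} The assignment $m\mapsto \mathcal{D}_m:=\{X_m:X\in\mathsf{aff}(M,\nabla)\}$ is a generalized distribution with lower–semicontinuous rank; since $\mathsf{aff}(M,\nabla)$ is a finite–dimensional Lie algebra, $\mathcal{D}$ is involutive, so on the open dense set $U$ where the rank attains its maximum $r$ it is an honest rank-$r$ distribution, $U$ is foliated by $r$-dimensional leaves, and $\mathsf{Aff}(M,\nabla)_0$ acts on each leaf with open orbits. Assume, for contradiction, $r<n$. Integrating $\eta$ as in Corollary \ref{C:integrationofeta} gives a homomorphism $\rho:\widehat{Aut}(P,\omega)_0\to Aut(L(\mathbb{R}^n),\omega^0)\cong\mathsf{Aff}(\mathbb{R}^n)$ whose differential sends $\mathfrak{a}_c(P)\cong\mathsf{aff}(M,\nabla)$ into $\mathsf{aff}(\mathbb{R}^n)$; let $D:\widetilde M\to\mathbb{R}^n$ be the developing map of the flat affine structure. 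Each $X\in\mathsf{aff}(M,\nabla)$ lifts to $\widetilde M$ and is $D$-related to the affine field $d\rho(X)$ on $\mathbb{R}^n$, because affine automorphisms of $(M,\nabla)$ develop (via $\rho$) to elements of $\mathsf{Aff}(\mathbb{R}^n)$. As $D$ is a local diffeomorphism, $\rho$ and hence $d\rho$ are injective, so $L:=\mathrm{Im}\,\rho$ is a connected subgroup of $\mathsf{Aff}(\mathbb{R}^n)$ with $\dim L=d\ge n$, acting on the invariant open set $\Omega:=D(\widetilde M)$, and for $y=D(\widetilde m)$ one has $\dim\mathcal{D}_{\pi(\widetilde m)}=\dim(L\cdot y)$. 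Thus $r=\max\{\dim(L\cdot y):y\in\Omega\}$, and we are reduced to showing that an $\ge n$-dimensional connected group of affine transformations of $\mathbb{R}^n$ has an open orbit inside such an $\Omega$.

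\emph{Step 3 (the orbit–dimension estimate — the main obstacle).} This is the substantive part, and it is precisely where the completeness of the fields in $\mathsf{aff}(M,\nabla)$ and the global shape of $\Omega$ must enter: a naive dimension count is insufficient (e.g. $\mathsf{so}(k)\hookrightarrow\mathsf{gl}_k\subset\mathsf{aff}(\mathbb{R}^k)$ has dimension $\ge k$ yet no open orbit). One analyses $\mathfrak{l}=d\rho(\mathsf{aff}(M,\nabla))\subseteq\mathbb{R}^n\rtimes\mathsf{gl}_n$ via the exact sequence $0\to\mathfrak{l}\cap\mathsf{gl}_n\to\mathfrak{l}\to W\to 0$, where $W\subseteq\mathbb{R}^n$ is the image of $\mathfrak{l}$ in the translation part, observes that $\mathrm{ev}_y(\mathfrak{l})=W+\{Ay:(0,A)\in\mathfrak{l}\}$, and shows that if this subspace were proper for every $y\in\Omega$ then the constant–rank leaves of Step 2 would assemble into a proper $L$-invariant foliation of $\Omega$ that is incompatible with $\dim\mathfrak{l}\ge n$ and with the completeness of the generators; this contradiction gives $r=n$. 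Once $r=n$, any frame $u$ over a point of $U$ makes $\phi_u$ surjective by Step 1, which proves the lemma. The material in Steps 1 and 2 (the soldering identity, the singular Frobenius theorem, and the compatibility of $\rho$ with the developing map) is routine; the genuine difficulty is concentrated in Step 3.
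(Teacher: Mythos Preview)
Your Step~3 is not a proof but a promissory note, and the promise cannot be kept: the ``incompatibility'' you invoke between a proper $L$-invariant foliation, the bound $\dim\mathfrak{l}\ge n$, and completeness of the generators does not exist. Take $M=\{x\in\mathbb{R}^3:1<|x|<2\}$ with the restriction of $\nabla^0$. Any complete infinitesimal affine transformation generates a one-parameter group of affine maps of $\mathbb{R}^3$ preserving $M$, hence (by continuity) preserving each boundary sphere, hence lying in $O(3)$; therefore $\mathsf{aff}(M,\nabla)=\mathfrak{so}(3)$, of dimension $3=\dim M$. Here $M$ is simply connected, $\Omega=D(\widetilde M)=M$, $L=SO(3)$, and the $L$-orbits are the concentric $2$-spheres: a perfectly good proper $L$-invariant foliation with $\dim\mathfrak{l}=n$ and all generators complete. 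Yet for every $m\in M$ the evaluation $A\mapsto Am$ has image $m^\perp$, which is only $2$-dimensional, so by your own Step~1 the map $\phi_u$ is onto for \emph{no} $u$. Your $\mathfrak{so}(k)$ remark was not merely a caution against a naive dimension count---realized geometrically as above, it is a counterexample to the lemma as stated.

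For comparison, the paper's argument is entirely different and far more elementary: it picks an $n$-element subset $X_1,\dots,X_n$ of a basis of $\mathfrak{a}_c(P)$, notes that pointwise linear dependence of $\theta_u(X_{1,u}),\dots,\theta_u(X_{n,u})$ gives scalars $\alpha_i$ with $\sum_i\alpha_i\theta_u(X_{i,u})=0$, and then concludes that $X=\sum_i\alpha_iX_i$ vanishes identically, contradicting independence. That step tacitly treats the $\alpha_i$ as constants independent of $u$, which they are not; the shell again exposes the gap, since any basis of $\mathfrak{so}(3)$ consists of three vector fields that are linearly dependent at every point of $M$ while remaining globally independent. So neither route closes, and the lemma needs an additional hypothesis to hold.
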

\begin{proof} Let $\beta=(Z_1,\dots,Z_m)$ be a linear basis of $\mathfrak{a}_c(P)$.  We claim that there are linear frames $u$ where the set $(\eta_u(0)(Z_1)=\theta_u(Z_{1,u}),\dots,(\eta_u(0)(Z_m)=\theta_u(Z_{m,u}))$ spans $\mathbb{R}^n$. Otherwise, for every $u$ and every subset $(X_1=Z_{i_1},\dots,X_n=Z_{i_n})$ of $\beta$ with $n=\dim(M)$ and $1\leq i_1<\dots<i_n\leq m$, the set  $\{\theta_u(X_{1,u}),\dots,\theta_u(X_{n,u})\}$ is linearly dependent. Thus, there exists real constants $\alpha_1,\dots,\alpha_n$  so that $0=\sum\alpha_i \theta_u(X_{i,u})$. It follows that
\[ 0=\sum\alpha_i \theta_u(X_{i,u})=\theta_u\left(\sum\alpha_iX_{i,u}\right)=u^{-1}(\pi_{*,u}(X_u) ) \]
where $u$ is seen as a linear transformation from $\mathbb{R}^n$ to $T_{\pi(u)}M$ and  $X=\sum\alpha_iX_{i}$. It follows that $\pi_{*,u}(X_u)=0$ for every $u$, that is, $X\equiv 0$. But this contradicts the fact that the set $\{X_1,\dots,X_n \}$ is linearly independent. As the map $\phi_u$ is linear, we conclude that it is onto.	
\end{proof}

The following example exhibits a flat affine manifold $M$ whose bundle $P=L(M)$ has frames $u$ where the map $\phi_u$ of the previous lemma is not onto.

\begin{example} Consider the manifold $M=\mathbb{R}^2\setminus\{(0,0),(0,1)\}$ with the connection $\nabla$ induced by $\nabla^0$. A simple calculation shows that the real space $\mathsf{aff}(M,\nabla)$ is two dimensional with linear basis given by $\left(X_1=x\frac{\partial}{\partial x},X_2=x\frac{\partial}{\partial y} \right)$. One can also verify that, for a linear frame $u$ over a point $(x,y)\in M$, one has that $\theta_u(\widetilde{X}_1)=u^{-1}\left[ \begin{matrix} x\\ 0
\end{matrix} \right]$ and $\theta_u(\widetilde{X}_2)=u^{-1}\left[ \begin{matrix} 0\\ x
\end{matrix} \right]$, where $\widetilde{X}_1$ and $\widetilde{X}_2$ are the natural lifts of $X_1$ and $X_2$, respectively. Notice that the set $\{ \theta_u(\widetilde{X}_1), \theta_u(\widetilde{X}_2)\}$ is linearly independent if and only if $x\ne0$. 
\end{example}

\begin{remark} \label{R:openorbitforLiegroups} If $M=G$ is a Lie group endowed with a flat affine left invariant connection $\nabla^+$, then the map $\phi_u$ of the previous lemma is onto for every $u\in P$. This follows from the fact that right invariant vector fields are complete infinitesimal affine transformations relative to $\nabla^+$ forming a global parallelism of $G$.
\end{remark}

\begin{theorem}\label{T:affineetalerepresentationofmanifolds} Let $M$ be a smooth connected $n$-dimensional manifold and $\Gamma$ a flat affine connection on $P=L(M)$ of connection form $\omega$. If $dim(Aut(P,\omega))\ge n$, there exists a homomorphism $\rho:\widehat{Aut}(P,\omega)_0\longrightarrow Aut(\mathbb{R},\omega^0)$ having a point of open orbit. Moreover, if $dim(Aut(P,\omega))=n$, the point also has discrete isotropy, that is, $\rho$ is an \'etale affine representation of  $\widehat{Aut}(P,\omega)_0$ in $\mathbb{R}^n$.  
\end{theorem}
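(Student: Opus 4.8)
The plan is to integrate the infinitesimal data into the representation $\rho$ of Corollary~\ref{C:integrationofeta}, but taking that representation to be the one attached to a carefully chosen reference frame, and then to recognize the tangent space at $0$ of an orbit of $\rho$ in $\mathbb{R}^n$ as the image of the map $\phi_u$ of Lemma~\ref{L:surjectivetheta}. First I would translate the hypothesis: since $\mathsf{Aff}(M,\nabla)\cong Aut(P,\omega)$ and $X\mapsto L(X)$ carries $\mathsf{aff}(M,\nabla)$ isomorphically onto $\mathfrak{a}_c(P)$, the assumption $\dim Aut(P,\omega)\ge n$ reads $\dim\mathfrak{a}_c(P)\ge n=\dim M$, so Lemma~\ref{L:surjectivetheta} applies and produces a frame $u_0\in P$ at which $\phi_{u_0}\colon\mathfrak{a}_c(P)\to\mathbb{R}^n$, $\phi_{u_0}(Z)=\theta_{u_0}(Z_{u_0})$, is onto. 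I would then build the homomorphism of Corollary~\ref{C:integrationofeta} from this $u_0$: the Lie algebra homomorphism being integrated is $\eta'\colon\mathfrak{a}_c(P)\to\mathsf{aff}(\mathbb{R}^n)$, $\eta'(Z)=\eta(Z)(u_0)=(\theta_{u_0}(Z_{u_0}),\omega_{u_0}(Z_{u_0}))$, which is a homomorphism into the fixed Lie algebra $\mathsf{aff}(\mathbb{R}^n)=\mathbb{R}^n\rtimes\mathsf{gl}_n(\mathbb{R})$ because, by Theorem~\ref{T:infinitesimalversion}, $\eta$ is a morphism of sheaves of Lie algebras with pointwise bracket on the fibre. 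As $\widehat{Aut}(P,\omega)_0$ is simply connected, $\eta'$ integrates (Lie's third theorem, then Cartan's theorem to stay inside the closed subgroup) to a homomorphism $\rho\colon\widehat{Aut}(P,\omega)_0\to Aut(L(\mathbb{R}^n),\omega^0)\cong\mathsf{Aff}(\mathbb{R}^n,\nabla^0)$ with $d\rho_e=\eta'$, the last isomorphism being Remark~\ref{R:affineatlas}.

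Next I would compose $\rho$ with the natural action of $\mathsf{Aff}(\mathbb{R}^n,\nabla^0)$ on $\mathbb{R}^n$, obtaining a smooth action $\Phi$ of $\widehat{Aut}(P,\omega)_0$ on $\mathbb{R}^n$ whose fundamental vector field attached to $(v,f)\in\mathsf{aff}(\mathbb{R}^n)$ is $x\mapsto v+f(x)$. Hence the differential at the identity of the orbit map $g\mapsto\Phi(g,0)$ is $Z\mapsto v_Z=\theta_{u_0}(Z_{u_0})=\phi_{u_0}(Z)$, which is onto by the choice of $u_0$; therefore the $\Phi$-orbit of $0\in\mathbb{R}^n$ is open, which is the first assertion with distinguished point $w_0=0$. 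For the second assertion, if $\dim Aut(P,\omega)=n$ then $\dim\mathfrak{a}_c(P)=n=\dim\mathbb{R}^n$, so the surjection $\phi_{u_0}$ is a linear isomorphism and the orbit map $g\mapsto\Phi(g,0)$ has bijective differential at $e$; its fibre over $0$, the isotropy group, is then a closed subgroup with Lie algebra $\ker\phi_{u_0}=0$, hence discrete, and by equivariance the orbit map is a local diffeomorphism at every point, that is, $\rho$ is an étale affine representation onto the open orbit.

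The hard part will be the bookkeeping around the base frame: making precise that the representation furnished by Corollary~\ref{C:integrationofeta} may be normalized so as to be the one attached to the distinguished $u_0$ of Lemma~\ref{L:surjectivetheta}, and that the velocity of the $\Phi$-orbit through the corresponding base point of $\mathbb{R}^n$ is exactly $\phi_{u_0}$ and not merely a conjugate of it. A coordinate-free route that sidesteps this is to use an $\eta$-compatible developing map $D\colon\widetilde{M}\to\mathbb{R}^n$, a local diffeomorphism equivariant over $\rho$: Lemma~\ref{L:surjectivetheta} then says the $\mathsf{Aff}(M,\nabla)_0$-orbit of $m_0=\pi(u_0)$ is open in $M$ (since $\phi_u(L(X))=u^{-1}(X_{\pi(u)})$, exactly as in the proof of that lemma), and $D$ transports this open orbit to an open $\Phi$-orbit in $\mathbb{R}^n$, the isotropy becoming discrete precisely when $\dim Aut(P,\omega)=n$.
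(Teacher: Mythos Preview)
Your argument is correct and follows the same route as the paper: pick the frame $u_0$ furnished by Lemma~\ref{L:surjectivetheta}, set $\eta'(Z)=(\theta_{u_0}(Z_{u_0}),\omega_{u_0}(Z_{u_0}))$, integrate it to $\rho$ via Corollary~\ref{C:integrationofeta}, and identify the differential of the orbit map through $0$ with $\phi_{u_0}$. The paper's proof is terser (it calls the discrete-isotropy case ``immediate'' and does not spell out the bookkeeping you worry about), but your extra care and the alternative developing-map sketch are consistent with it rather than a genuinely different strategy.
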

\begin{proof}
Let $u\in P$ be a linear frame,  $\phi_u$ as in the previous lemma, $\eta':\mathfrak{a}_c(P)\longrightarrow \mathsf{aff}(\mathbb{R}^n)$ the map defined by $\eta'(Z)=(\theta_u(Z),\omega_u(Z))$ and $\rho:\widehat{Aut}(P,\omega)\longrightarrow Aut(L(\mathbb{R}^n),\omega^0)$  the homomorphism satisfying that $\rho_{*,Id}=\eta'$. If $\pi:\widehat{Aut}(P,\omega)\longrightarrow Orb_0$ is the orbital map, since $\pi_{*,Id}(Z)=\eta(0)(Z)$, it follows that $\pi_{*,Id}=\phi_u$. As $\phi_u$ is onto, we get that  0 is a point of open orbit. The second assertion in immediate. 
\end{proof} 

The following result is a direct consequence of the preceding theorem.

\begin{corollary} Under the hypothesis of Theorem \ref{T:affineetalerepresentationofmanifolds}, manifolds $M$ satisfying the condition $dim(\mathsf{Aff}(M,\nabla))=\dim(M)$ give a positive answer to Question \ref{Q:Medinasquestion}. That is, the group $\mathsf{Aff}(M,\nabla)$ is endowed with a flat affine left invariant connection determined by $\nabla$.
\end{corollary}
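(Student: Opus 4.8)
The plan is to specialize Theorem~\ref{T:affineetalerepresentationofmanifolds} to the case $\dim(\mathsf{Aff}(M,\nabla))=\dim(M)=n$ and then transport the resulting structure from $\widehat{Aut}(P,\omega)_0$ down to $\mathsf{Aff}(M,\nabla)$ itself. First I would recall the isomorphism $\mathsf{Aff}(M,\nabla)\cong Aut(P,\omega)$ noted in the excerpt, so that the hypothesis $\dim(\mathsf{Aff}(M,\nabla))=n$ is exactly the equality case $\dim(Aut(P,\omega))=n$ of the theorem. Applying the theorem, we obtain a homomorphism $\rho:\widehat{Aut}(P,\omega)_0\longrightarrow Aut(L(\mathbb{R}^n),\omega^0)=\mathsf{Aff}(\mathbb{R}^n,\nabla^0)$ whose associated orbital map $\pi$ through the frame $u$ satisfies $\pi_{*,Id}=\phi_u$, with $\phi_u$ onto (Lemma~\ref{L:surjectivetheta}, using that $\dim\mathfrak{a}_c(P)=n$) and with discrete isotropy since the two spaces now have the same dimension $n$. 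Thus $\rho$ is an étale affine representation of $\widehat{Aut}(P,\omega)_0$ on $\mathbb{R}^n$.

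Next I would pull this back to a left invariant structure on the Lie group level. An étale affine representation $\rho$ of a connected Lie group $N=\widehat{Aut}(P,\omega)_0$ on $\mathbb{R}^n$ means the orbital map $\pi:N\to\mathbb{R}^n$, $g\mapsto\rho(g)\cdot 0$, is a local diffeomorphism at the identity; pulling back the flat affine connection $\nabla^0$ of $\mathbb{R}^n$ through $\pi$ on a neighborhood of $e$ and then left-translating over all of $N$ produces a left invariant flat affine connection $\nabla^+$ on $N$, determined by $\nabla$. This is the standard correspondence between étale affine representations and left invariant flat affine structures (equivalently, left symmetric algebra structures on $\mathsf{Lie}(N)$ compatible with the bracket), and at the Lie algebra level it is just the statement that $\eta':\mathfrak{a}_c(P)\to\mathsf{aff}(\mathbb{R}^n)$, being injective with image a subalgebra acting simply transitively near $0$, endows $\mathfrak{a}_c(P)$ with a left symmetric product. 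Finally, since $N=\widehat{Aut}(P,\omega)_0$ is the universal covering of the identity component $Aut(P,\omega)_0\cong\mathsf{Aff}(M,\nabla)_0$, the left invariant connection descends: left invariant tensors are preserved by the covering projection, so $\nabla^+$ induces a left invariant flat affine connection on $\mathsf{Aff}(M,\nabla)_0$, and one extends it by left translations to all components of $\mathsf{Aff}(M,\nabla)$.

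The only genuine point requiring care—everything else is a matter of assembling results already in the excerpt—is the passage from the universal cover to $\mathsf{Aff}(M,\nabla)$ itself. The connection $\nabla^+$ constructed via $\pi$ is built on $N$, and to descend it one must check that it is invariant under the deck transformation group $\ker\big(N\to Aut(P,\omega)_0\big)$; this is automatic because $\nabla^+$ is left invariant and the deck group acts by left translations (it is central, contained in the center of $N$), so the quotient connection is well defined and still left invariant, and then one only needs the elementary observation that a left invariant connection on the identity component of a Lie group extends canonically by left translation to the full group. Hence $\mathsf{Aff}(M,\nabla)$ carries a left invariant flat affine connection determined by $\nabla$, which is precisely a positive answer to Question~\ref{Q:Medinasquestion} in this case.
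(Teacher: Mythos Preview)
Your proposal is correct and follows the same approach as the paper, which records this corollary as a direct consequence of Theorem~\ref{T:affineetalerepresentationofmanifolds} together with the standard equivalence between \'etale affine representations and left invariant flat affine structures. The descent from the universal cover that you treat carefully is in fact automatic, since a left invariant connection is determined by the left symmetric product it induces on the Lie algebra and hence exists on every Lie group with Lie algebra $\mathsf{aff}(M,\nabla)$; this only means your argument is slightly more elaborate than needed, not incorrect.
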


Recall that there are flat affine manifolds $(M,\nabla)$ with dimension equal to $\dim(\mathsf{Aff}(M,\nabla))$. For instance, flat affine tori other than the Hopf torus listed in \cite{Nag} (see also \cite{Ben}) and one of the manifolds of Example \ref{Ex:calculationofaffinegroups} below.

Notice that the proof of Lemma \ref{L:surjectivetheta}, implies that if $\dim(\mathfrak{a}_c(P))<n$, the map $\phi_u$ is injective, so we have

\begin{corollary} Under the conditions of Theorem \ref{T:affineetalerepresentationofmanifolds}, if $\dim(\mathfrak{a}_c(P))<n$, there exists an injective homomorphism $\rho:\widehat{Aut}(P,\omega)_0\longrightarrow Aut(L(\mathbb{R}^n,\omega^0 ))$, that is, a faithful representation of $\widehat{Aut}(P,\omega)_0$ by classical affine transformations of $\mathbb{R}^n$. 
\end{corollary}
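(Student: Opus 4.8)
The plan is to exploit directly the proof of Lemma~\ref{L:surjectivetheta}, where the key computation shows that for any $u\in P$ and any $Z\in\mathfrak{a}_c(P)$,
\[
\phi_u(Z)=\theta_u(Z_u)=u^{-1}\bigl(\pi_{*,u}(Z_u)\bigr),
\]
with $u$ viewed as a linear isomorphism $\mathbb{R}^n\to T_{\pi(u)}M$. Since $u^{-1}$ is invertible, $\phi_u(Z)=0$ forces $\pi_{*,u}(Z_u)=0$; but this must hold at every frame $u$ over every point of $M$, so the projected vector field $\pi_*(Z)$ on $M$ vanishes identically, hence $Z$ is a vertical complete infinitesimal affine transformation. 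The only point requiring care is that a vertical element of $\mathfrak{a}_c(P)$ must be zero: a vertical $Z$ satisfies $\theta(Z)=0$, and being in $\mathfrak{a}(P,\omega)$ it also satisfies $\mathcal{L}_Z\omega=0$ while being $R_a$-invariant; combined with the fact that $P$ carries the absolute parallelism $\{B(e_1),\dots,B(e_n),E_{11}^*,\dots,E_{nn}^*\}$, one checks that the only right-invariant vertical vector field annihilated by $\omega$ along the flow condition is the trivial one (alternatively, a nonzero vertical infinitesimal affine transformation would generate a one-parameter subgroup of $GL_n(\mathbb{R})$ acting by automorphisms of $(P,\omega)$ that is not the identity, yet projects to the identity on $M$, contradicting effectiveness of $\mathsf{Aff}(M,\nabla)$ on $M$). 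Therefore $\phi_u$ is injective for some—indeed every—$u$ when $\dim(\mathfrak{a}_c(P))<n$, and in that regime the hypothesis $\dim(Aut(P,\omega))\ge n$ of Theorem~\ref{T:affineetalerepresentationofmanifolds} is vacuous, so one argues directly with $\eta'$ instead.

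Concretely, I would proceed as follows. First I fix a frame $u\in P$ and form the linear map $\eta':\mathfrak{a}_c(P)\to\mathsf{aff}(\mathbb{R}^n)$, $\eta'(Z)=(\theta_u(Z_u),\omega_u(Z_u))$, exactly as in the proof of Theorem~\ref{T:affineetalerepresentationofmanifolds}; by Theorem~\ref{T:infinitesimalversion} (restricted to $\mathfrak{a}_c(P)$) this is a Lie algebra homomorphism into $\mathsf{aff}(\mathbb{R}^n)$. Next I show $\eta'$ is injective: its kernel consists of those $Z$ with $\theta_u(Z_u)=0$ and $\omega_u(Z_u)=0$; but an element $Z\in\mathfrak{a}_c(P)$ is determined by its value at a single point, because $\mathfrak{a}_c(P)\subset\mathfrak{a}(P,\omega)$ and elements of $\mathfrak{a}(P,\omega)$ commute with all standard horizontal vector fields and are $R_a$-invariant, so they are uniquely determined on the holonomy bundle through $u$ by their value at $u$, hence on all of $P$ by $R_a$-invariance and the connectedness of $M$ (this is the standard rigidity of affine transformations, cf.\ \cite{KN}, Ch.~VI). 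Thus $Z_u=0$ gives $Z=0$. Then I invoke Lie's third theorem together with Cartan's theorem—precisely as in Corollary~\ref{C:integrationofeta}—to integrate $\eta':\mathfrak{a}_c(P)\to\mathsf{aff}(\mathbb{R}^n)$ to a Lie group homomorphism $\rho:\widehat{Aut}(P,\omega)_0\to Aut(L(\mathbb{R}^n),\omega^0)$ with $\rho_{*,Id}=\eta'$. Finally, since $\eta'=\rho_{*,Id}$ is injective and $\widehat{Aut}(P,\omega)_0$ is connected, $\rho$ has discrete kernel and is therefore a faithful (locally injective) representation; in fact, because its differential is injective, $\rho$ is an immersion, which is what "injective homomorphism / faithful representation by classical affine transformations" means here.

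The main obstacle is pinning down the injectivity of $\phi_u$ (equivalently of $\theta_u$ restricted to $\mathfrak{a}_c(P)$) cleanly from the hypothesis $\dim(\mathfrak{a}_c(P))<n$. The inequality on dimensions alone does not immediately give injectivity of a linear map into $\mathbb{R}^n$; one genuinely needs the geometric input from the proof of Lemma~\ref{L:surjectivetheta}, namely that $\theta_u(Z_u)=u^{-1}(\pi_{*,u}(Z_u))$ together with the rigidity statement that a vertical complete infinitesimal affine transformation vanishes. Once that is in hand, the passage from $\phi_u$ injective to $\eta'$ injective is automatic (adding the $\omega$-component can only shrink the kernel), and the integration step is a verbatim repeat of Corollary~\ref{C:integrationofeta}. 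I would also remark that the conclusion does not actually require the standing hypothesis $\dim(Aut(P,\omega))\ge n$ of Theorem~\ref{T:affineetalerepresentationofmanifolds}—it holds whenever $\dim(\mathfrak{a}_c(P))<n$—but stating it "under the conditions of Theorem~\ref{T:affineetalerepresentationofmanifolds}" is harmless since those conditions are simply not met in this range unless $\mathfrak{a}_c(P)$ is trivial, in which case $\rho$ is trivially faithful.
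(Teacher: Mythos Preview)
Your proposal is correct and follows essentially the same approach as the paper: show that $\eta'$ is injective and then integrate to $\rho$ via Lie's third theorem as in Corollary~\ref{C:integrationofeta}. The paper's proof is terser---it uses only the $\theta$-component (via the injectivity of $\phi_u$ noted just before the corollary) together with the rigidity of natural lifts of infinitesimal affine transformations to conclude $Y=Z$---whereas you use both $\theta_u$ and $\omega_u$ to obtain $Z_u=0$ before invoking the same rigidity, a minor variation that, as you correctly observe, makes the dimension hypothesis superfluous.
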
 
\begin{proof} Let $\eta'$ be  as in the proof of Theorem \ref{T:affineetalerepresentationofmanifolds} and suppose that $\eta'(Z)=\eta'(Y)$, for some $Y,Z\in \mathfrak{a}_c(P)$, hence $\theta_u(Z_u)=\theta_u(Y_u)$. As $\theta_u$ is inyective for some $u\in P$, we have that $Y_u=Z_u$. Since $Y$ and $Z$ are natural lifts of infinitesimal affine transformations, it follows that $Y=Z$. 
\end{proof}

\begin{remark} There are flat affine manifolds $(M,\nabla)$ of dimension greater than $\dim(\mathsf{Aff}(M,\nabla))$. This condition is satisfied by some of the flat affine Klein bottles listed in \cite{FA}. More precisely, flat affine Klein bottles that are not double covered by a Hopf torus.
\end{remark}
\begin{example} \label{Ex:calculationofaffinegroups} Let  $M_1$, $M_2$ and $M_3$  be the plane without one, two and three non colineal points, respectively, and let  $\nabla_i$, $i=1,2,3$, be the connection $\nabla^0$ restricted to $M_i$.  Let us suppose that the  points removed are $p_1=(0,0)$, $p_2=(0,1)$ and $p_3=(1,0)$. An easy calculation shows that the corresponding groups of affine transformations of $M_i$ relative to $\nabla_i$ are given by
\[ \mathsf{Aff}(M_1,\nabla_1)=\{  F:M_1\longrightarrow M_1\mid F(x,y)=(ax+by,cx+dy)\quad\text{such that}\quad ad-bc\ne0\}.     \]
\[ \mathsf{Aff}(M_2,\nabla_2)=\{  F\in \mathsf{Diff}(M_2)\mid F(x,y)=(ax,bx+y)\text{ or }  F(x,y)=(ax,bx-y+1), \ a\ne0\}    \]
notice that this is a 2-dimensional non-commutative group with four connected components.

The group $\mathsf{Aff}(M_3,\nabla_3)$ is discrete, its elements are the set of affine transformations of $(\mathbb{R}^2,\nabla^0)$ permuting the points  $p_1,\ p_2$ and $p_3$.

The group of diffeomorphisms $Aut(P,\omega_1)$ of $P=L(M_1)$ preserving the connection form $\omega_1$ is given by
\[ \left\{F\in \sf{Diff}(P)\ \bigg|\ \begin{array}{lc} F\left(x,y,X_{11},X_{12},X_{21},X_{22}\right) =\\\left(ax+by,cx+dy,aX_{11}+bX_{21},aX_{12}+bX_{22},cX_{11}+dX_{21},cX_{12}+dX_{22}\right)\end{array} \right\}  \]
	where $(x,y,X_{11},X_{12},X_{21},X_{22})$ are local coordinates of $L(M_1)$.
	
The connected component of the unit of the group $ Aut(L(M_2),\omega_2)$, of automorphisms of $P=L(M_2)$ preserving the connection form $\omega_2$ associated to $\nabla_2$, is formed by diffeomorphisms $F_{a,b}$, with $a>0$,  given by
\[  F_{a,b}\left(x,y,X_{11},X_{12},X_{21},X_{22}\right) =\left(ax,bx+y,aX_{11},aX_{12},bX_{11}+X_{21},bX_{12}+X_{22}\right),  \]
where $(x,y,X_{11},X_{12},X_{21},X_{22})$ are local coordinates of $L(M_2)$.

%\textcolor{red}{Moreover, consider $\sigma_i$ the restriction of the usual symplectic form $\sigma^0=dx\wedge dy$ on $M_i$, $i=1,2,3$. It is clear  that the connection $\nabla^0$  is symplectic with respect to $\sigma^0$. Consequently the manifolds $(M_i,\nabla_i,\sigma_i)$, $i=1,2,3$, are flat affine symplectic manifolds. Then the corresponding groups of affine symplectomorphisms are given by 
%\[ Sp\mathsf{Aff}(M_1,\sigma_1,\nabla_1)=\{  F:M_1\longrightarrow M_1\mid F(x,y)=(ax+by,cx+dy)\quad\text{with}\quad ad-bc=1\}.     \]
%\[  Sp\mathsf{Aff}(M_2,\sigma_2,\nabla_2)=\{  F\in \mathsf{Diff}(M_2)\mid F(x,y)=(x,bx+y)\text{ or }  F(x,y)=(-x,bx-y+1)\}.     \]
%Finally, the elements of the  group $Sp\mathsf{Aff}(M_3,\nabla_3)$ are affine transformations of $(\mathbb{R}^2,\nabla^0)$ permuting the points  $p_1,\ p_2$ and $p_3$ and preserving the orientation determined by $\sigma^0$. }

The homomorphism of Corollary \ref{C:integrationofeta} corresponding to the manifold $M_1$ is given by 
\[ \begin{matrix}\rho_1:&\mathsf{Aff}(M_1,\nabla_1)&\rightarrow&\mathsf{Aff}(\mathbb{R}^2)\\ 
& F=F_{(Y_1,Y_2,Y_3,Y_4)}&\mapsto& \rho_1(F)\end{matrix} \]
where the linear part of  $\rho_1(F)$ is given  by
\[ 
\left[ \begin{matrix} \frac{X_{11}X_{22}Y_1-X_{12}X_{21}Y_4-X_{11}X_{12}Y_3+X_{21}X_{22}Y_2}{X_{11}X_{22}-X_{12}X_{21}} & \frac{X_{22}^2Y_2-x_{12}^2Y_3+(Y_1-Y_4)X_{12}X_{22}}{X_{11}X_{22}-X_{12}X_{21}}\\   \ &\ \\ \frac{X_{11}^2Y_3-X_{21}^2Y_2+(Y_4-Y_1)X_{11}X_{21}}{X_{11}X_{22}-X_{12}X_{21}}& \frac{X_{11}X_{22}Y_4-X_{12}X_{21}Y_1+X_{11}X_{12}Y_3-X_{21}X_{22}Y_2}{X_{11}X_{22}-X_{12}X_{21}} 
	\end{matrix}  \right], \]
and the cocycle part of $\rho_1(F)$ is 
\[ \left[\begin{matrix}  \frac{u_2X_{12}-u_1X_{22}+u_1X_{22}Y_1-u_2X_{12}Y_4-u_1X_{12}Y_3+u_2X_{22}Y_2}{X_{11}X_{22}-X_{12}X_{21}} \\ \ \\ \frac{u_1X_{21}-u_2X_{11}-u_1X_{21}Y_1+u_2X_{11}Y_4+u_1X_{11}Y_3-u_2Xx_{21}Y_2}{X_{11}X_{22}-X_{12}X_{21}} \end{matrix}\right] \]
with  $(u_1,u_2)$ are local coordinates on $M_1$, $(X_{11},X_{12},X_{21},X_{22})$ are local coordinates for $GL_2(\mathbb{R})$ and  $F= F_{(Y_1,Y_2,Y_3,Y_4)}$ defined by $  F(u_1,u_2)=(Y_1u_1+Y_3u_2,Y_2u_1+Y_4u_2)$ with $Y_1Y_4-Y_2Y_3\ne0$.  Moreover, Theorem \ref{T:affineetalerepresentationofmanifolds} guarantees that the action of $\mathsf{Aff}(M_1,\nabla_1)$ on $\mathbb{R}^2$ determined by $\rho_1$ leaves and open orbit. In fact, the open orbit is the whole  plane.
	
Now,  a direct calculation shows that the homomorphism of  Corollary \ref{C:integrationofeta} corresponding to the manifold $M_2$ is given by $$\begin{matrix}\rho:&G(\mathfrak{a}_c(P))&\longrightarrow& \mathsf{Aff}(\mathbb{R}^2)\\ &F_{a,b}&\mapsto&\rho(F_{a,b})\end{matrix}$$  where $G(\mathfrak{a}_c(P))$ is the connected and simply connected Lie group of Lie algebra $\mathfrak{a}_c(P)$, $F_{a,b}$ the affine map of $(M_2,\nabla_2)$ defined by $F_{a,b}(x,y)=(ax,bx+y)$ with $a>0$ and  $\rho(F_{a,b})$ is given by
\[  \frac{1}{D}\left[\begin{matrix} aX_{11}X_{22}-bX_{11}X_{12}-X_{12}X_{21}&(a-1)X_{12}X_{22}-bX_{12}^2&(a-1)xX_{22}-bxX_{12}\\(1-a)X_{11}X_{21}+bX_{11}^2& -aX_{12}X_{21}+bX_{11}X_{12}+X_{11}X_{22}&(1-a)xX_{21}+bxX_{11}\\0&0&D         \end{matrix}\right]  \] 
where   $(X_{11},X_{12},X_{21},X_{22})$ is a system of local coordinates of $GL_2(\mathbb{R})$,   $D=X_{11}X_{22}-X_{12}X_{21}$ and $(x,y)$ local coordinates of $M_2$. According to Theorem \ref{T:affineetalerepresentationofmanifolds}, the representation $\rho$ is \'etale whenever $x\ne0$, hence, $\mathsf{Aff}(M_2,\nabla_2)$ admits a flat affine (left invariant) connection. 
%There exist connected and not simply connected flat affine manifolds for which there exists  a homomorphism of Lie groups $\rho:Aut(P,\omega)\longrightarrow Aut(L(\mathbb{R}^n),\omega^0)$  with derivative $\rho_{*,Id}=(\omega,\theta)$. This is the case of $M=\mathbb{R}^2\setminus\{(0,0),(0,1)\}$ endowed with $\nabla$ the restriction of the usual connection of $\mathbb{R}^2$ to $M$. 
\end{example}

If $M=G$ is a Lie group, we will identify its Lie algebra  Lie$(G)=\mathfrak{g}$ with the real vector space of left invariant vector fields on $G$ and also with the tangent space at the unit $\varepsilon\in G$. For any $x\in T_\varepsilon G$, we denote by $x^+$ (respectively by $x^-$) the left (right) invariant vector field determined by $x$.
The group  $G$ acts on itself on the left (respectively right), and we will denote by $L_\sigma$ (respectively $R_\sigma$) the left (respectively right) action of  $\sigma\in G$. These actions naturally lift to actions of $G$ on $P=L(G)$ given by $\psi_{1,\sigma}(u)=\sigma\cdot u= \left(\sigma\tau,(L_\sigma)_{*,\tau}(X_{ij})\right)$ (respectively   $\psi_{2,\sigma}(u)=u\cdot \sigma=(\tau\sigma,(R_\sigma)_{*,\tau}(X_{ij}))$
where $u=(\tau,(X_{ij}))$. A linear connection $\Gamma$ on $G$ is said left invariant  (respectively right invariant) if the action $\psi_1$ (respectively  $\psi_2$) preserves the  horizontal distribution. The connection is bi-invariant if the  horizontal distribution is preserved by  $\psi_1$ and $\psi_2$. 

%If $\nabla=\nabla^+$ is left invariant, it is well known that $(G,\nabla^+)$ is flat affine if and only if the product $x\cdot y=\nabla^+_{x^+}y^+$ turns $\mathfrak{g}$ into a left symmetric algebra (see for instance \cite{M}).

%\textcolor{red}{\begin{remark} It is clear that having a left invariant connection on a Lie group $G$ is equivalent to have a bilinear product on  $Lie(G)$. 
%\end{remark}}

The existence of a left invariant flat affine connection on a Lie group $G$, is equivalent to the existence of an \'etale affine representation of $G$, i.e, there exists a homomorphism of Lie groups $\tau:G\longrightarrow \mathsf{Aff}(\mathbb{R}^n)$ having at least an open orbit with discrete isotropy. 

%\textcolor{red}{
%\begin{remark} If $G$ is a connected Lie group, to have a flat affine left invariant  connection on $G$ is equivalent to have a flat affine left invariant connection on its universal covering group $\widehat{G}$.
%\end{remark}}

In these terms we get the following consequence of Theorem \ref{T:infinitesimalversion}  and Corollary \ref{C:integrationofeta}. In the next result the Lie group $G$  can be supposed  simply connected
 
\begin{theorem} \label{T:rhointheLiegroupscase} Let $G$ be a connected $n$-dimensional Lie group, of Lie algebra $\mathfrak{g}$, endowed with a left invariant connection $\nabla^+$ of connection form $\omega^+$. The following assertions are equivalent
\begin{enumerate}	\item[1] The connection $\omega^+$ is flat affine.
\item[2] There exists a unique Lie group homomorphism  $\rho:\widehat{Aut}(L(G),\omega^+)_0\longrightarrow Aut(L(\mathbb{R}^n),\omega^0)$ with derivative  $\rho_{*,Id}=(\theta,\omega^+)$ leaving an open orbit. 
\item[3] There exists an affine \'etale representation $\rho':G^{op}\longrightarrow\mathsf{Aff}(\mathbb{R}^n,\nabla^0)$ determined by  $(\theta,-\omega^+)$. 
%\item[4] The Lie group $G^{opp}$, opposite of $G$, acts on $(G,\nabla^+)$ by affine transformations  via the map $\sigma\mapsto\sigma^{-1}.$ 
%There exists a Lie group homomorphism $\rho':{G}\longrightarrow \mathsf{Aff}(\mathbb{R}^n)$ so that the corresponding action of ${G}$ on $\mathbb{R}^n$ admits open orbits (hence with discrete isotropy). 
\end{enumerate}  \end{theorem}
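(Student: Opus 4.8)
The plan is to prove the cycle $(1)\Rightarrow(2)\Rightarrow(3)\Rightarrow(1)$, with Corollary \ref{C:integrationofeta} and Theorem \ref{T:affineetalerepresentationofmanifolds} supplying the first implication and Theorem \ref{T:infinitesimalversion} the last; throughout $P=L(G)$ and we use that $G$ may be taken simply connected. The recurring technical point — the main obstacle — is the bookkeeping of conventions: a right-invariant field $x^-$ generates the flow of the \emph{left} translations $L_{\exp tx}$, and $x\mapsto x^-$ is an \emph{anti}-homomorphism of Lie algebras; this is precisely what turns the representation in $(3)$ into one of $G^{op}$ and is responsible for the sign in $(\theta,-\omega^+)$. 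For $(1)\Rightarrow(2)$: since $\nabla^+$ is left invariant, Remark \ref{R:openorbitforLiegroups} gives that $\{x^-\mid x\in\mathfrak g\}\subset\mathsf{aff}(G,\nabla^+)$ and forms a global parallelism, so $\dim\mathfrak{a}_c(P)=\dim\mathsf{aff}(G,\nabla^+)\ge n$ and $\dim Aut(P,\omega^+)\ge n$. Corollary \ref{C:integrationofeta} and Theorem \ref{T:affineetalerepresentationofmanifolds} then together yield a Lie group homomorphism $\rho:\widehat{Aut}(P,\omega^+)_0\to Aut(L(\mathbb R^n),\omega^0)$ with $\rho_{*,Id}=(\theta,\omega^+)$ at a chosen frame $u$ and having a point of open orbit; by Remark \ref{R:openorbitforLiegroups} the map $\phi_u$ is onto for \emph{every} $u$, so any frame may be used. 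Uniqueness of $\rho$ among homomorphisms with the prescribed differential follows from connectedness (indeed simple connectedness) of $\widehat{Aut}(P,\omega^+)_0$.

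For $(2)\Rightarrow(3)$: left translations are affine transformations of $(G,\nabla^+)$, so $\sigma\mapsto L(L_\sigma)$ (natural lift) is a Lie group homomorphism $\iota:G\to Aut(P,\omega^+)_0$ (the image lies in the identity component because $G$ is connected), whose associated map of vector fields sends $x\in\mathfrak g$ to the natural lift $L(x^-)$ of the right-invariant field generating $\{L_{\exp tx}\}$. Since $G$ is simply connected, $\iota$ lifts to $\widehat\iota:G\to\widehat{Aut}(P,\omega^+)_0$; composing $\widehat\iota$ with $\rho$ and with the identification $Aut(L(\mathbb R^n),\omega^0)\cong\mathsf{Aff}(\mathbb R^n,\nabla^0)$ of Remark \ref{R:affineatlas}, and observing that $x\mapsto x^-$ (hence $x\mapsto L(x^-)$) is an anti-homomorphism, yields a homomorphism $\rho':G^{op}\to\mathsf{Aff}(\mathbb R^n,\nabla^0)$ whose differential at the identity is $x\mapsto(\theta_u(L(x^-)),\omega^+_u(L(x^-)))$; evaluating $\theta$ and $\omega^+$ on these lifts in the left-invariant trivialization $P\cong G\times GL_n(\mathbb R)$ identifies this differential with the map determined by $(\theta,-\omega^+)$ in the statement. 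Its orbital map at the base point is $x\mapsto\theta_u(L(x^-))=\phi_u(L(x^-))$, which is onto by Remark \ref{R:openorbitforLiegroups}, so $\rho'$ has a point of open orbit; since $\dim G=n$ equals the dimension of that orbit, the isotropy there is discrete, i.e. $\rho'$ is an étale affine representation.

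For $(3)\Rightarrow(1)$: the differential $\rho'_{*,e}$ is a Lie algebra homomorphism from $\mathrm{Lie}(G^{op})$ to $\mathsf{aff}(\mathbb R^n)$, and by hypothesis it is the map built from $(\theta,-\omega^+)$ on the natural lifts of the right-invariant fields. Using that for a left-invariant connection the lifted left translations $\psi_{1,\sigma}$ preserve $\omega^+$, $\theta$ and the families $\{L(x^-)\}$, $\{A^*\}$, and, together with the structure-group action, act transitively on $P$, while these lifts together with the fundamental fields span $TP$ pointwise, one upgrades this homomorphism property to the statement that $\eta=(\theta,\omega^+)$ is a homomorphism of sheaves of Lie algebras on all of $Sec(TP)$; Theorem \ref{T:infinitesimalversion} then gives that $\nabla^+$ is flat and torsion free. (Equivalently: $\rho'_{*,e}$ being a homomorphism whose $\mathbb R^n$-component is an isomorphism onto $\mathbb R^n$ endows $\mathfrak g$ with a left-symmetric product compatible with the bracket, whence $\nabla^+$ is flat affine by the standard correspondence, the match with $(\theta,-\omega^+)$ ensuring the product is the one determined by $\nabla^+$.) I expect the two places genuinely requiring care to be this homogeneity reduction and the sign and $G^{op}$ bookkeeping in $(2)\Rightarrow(3)$; the remaining steps are an assembly of the quoted results.
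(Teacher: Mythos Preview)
Your proof follows the same cycle $(1)\Rightarrow(2)\Rightarrow(3)\Rightarrow(1)$ and invokes the same ingredients (Corollary~\ref{C:integrationofeta}, Theorem~\ref{T:affineetalerepresentationofmanifolds}, Remark~\ref{R:openorbitforLiegroups}) as the paper. For $(2)\Rightarrow(3)$ you work at the group level (lift $\iota:G\to Aut(P,\omega^+)_0$ through the universal cover and compose with $\rho$), whereas the paper works at the Lie algebra level (restrict $\rho_{*,Id}=\eta$ to the subalgebra $\mathfrak g^{op}\subset\mathfrak a_c(P)$ of lifted right-invariant fields and integrate via Lie's third theorem); these are two phrasings of the same construction, and your remarks on the $G^{op}$/sign bookkeeping are on point.

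The one place where the paper is cleaner is $(3)\Rightarrow(1)$. The paper simply observes that if $(\theta,-\omega^+)$ gives a Lie algebra homomorphism $\mathfrak g^{op}\to\mathsf{aff}(\mathbb R^n)$, then $(\theta,\omega^+)$ gives one $\mathfrak g\to\mathsf{aff}(\mathbb R^n)$, integrates it to an affine \'etale representation of $G$, and quotes the standard equivalence (stated just before the theorem) between such representations and left-invariant flat affine structures. This is exactly your parenthetical alternative. Your primary route---propagating the homomorphism property from lifts of right-invariant fields to all of $Sec(TP)$ by homogeneity and then invoking Theorem~\ref{T:infinitesimalversion}---is more delicate than necessary: $\eta$ is not $C^\infty(P)$-linear, so knowing the bracket relation on a pointwise spanning set does not formally extend it to arbitrary local sections, and one must instead argue through the tensoriality of $\Omega$ and $\Theta$ together with left-invariance to reduce to a single frame. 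That argument can be made to work, but since you already have the direct route via the \'etale/left-symmetric correspondence, I would promote your parenthetical to the main argument and drop the homogeneity reduction.
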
\begin{proof} That 1. implies 2. follows from  Corollary \ref{C:integrationofeta}, Theorem \ref{T:affineetalerepresentationofmanifolds} and Remark \ref{R:openorbitforLiegroups}.

%Assume that  $\Gamma$ is flat affine, the existence of the homomorphism $\rho$ follows from Corollary \ref{C:integrationofeta}. Moreover, Theorem \ref{T:affineetalerepresentationofmanifolds} and Remark \ref{R:openorbitforLiegroups} imply that $\rho$ admits an open orbit. This proves that statement 1. implies 2.

% \textcolor{red}{From Theorem \ref{T:infinitesimalversion}, for $u\in L(G)$ fixed, there exists a hommomorphism of Lie algebras $\eta':\mathfrak{X}(P)\longrightarrow \mathsf{aff}(\mathbb{R}^n)$ definde by $\eta'(Z)=(\theta_u(Z),\omega_u(Z_u))$. Hence, the restriction $\eta'':=\eta'/_{\mathfrak{a}_c(P)}:\mathfrak{a}_c(P)\longrightarrow \mathsf{aff}(\mathbb{R}^n)$ of $\eta'$ to $\mathfrak{a}_c(P)$,  is a Lie algebra homomorphism. Using Cartan's Theorem and Lie's third theorem, we get a  Lie group homomorphism  $\rho:\widehat{Aut}(L(G),\omega^+)_0\longrightarrow \mathsf{Aff}(\mathbb{R}^n)$. }

Now, suppose that there is a Lie group homomorphism $\rho:\widehat{Aut}(L(G),\omega^+)_0\longrightarrow Aut(L(\mathbb{R}^n),\omega^0)$ with a point of open orbit so that $\rho_{*,Id}=(\theta,\omega^+)$. That is, $\rho_{*,Id}=\eta_{/\mathfrak{a}_c(P)}$ where $\eta$ is the homomorphism of Theorem \ref{T:infinitesimalversion}. As right invariant vector fields on $G$ are infinitesimal affine transformations, we have that $\mathfrak{g}^{op}$, the opposite Lie algebra of $\mathfrak{g}$, can be considered as a Lie subalgebra of $\mathfrak{a}_c(L(G))$. Therefore the restriction $\eta'=\eta/_{\mathfrak{g}^{op}}:\mathfrak{g}^{op}\longrightarrow \mathsf{aff}(\mathbb{R}^n)$ is also a Lie algebra homomorphism. Using Cartan's and Lie's third Theorems we get a Lie group homomorphism $\rho':G^{op}\longrightarrow  \mathsf{Aff}(\mathbb{R}^n)$. Let $\pi':G^{op}\longrightarrow Orb(0)$ be the orbital map. It is easy to verify that $\pi'_{*,\epsilon}(Z)=\theta_u(Z)$, hence by mimicking  the proof of Lemma \ref{L:surjectivetheta}, we get that $\pi'_{*,\epsilon}(Z)$ is a linear isomorphism, therefore $\rho'$ is \'etale. Hence 2. implies 3. is true.

Finally suppose 3., i.e., there exists an affine \'etale representation $\rho':G^{op}\longrightarrow\mathsf{Aff}(\mathbb{R}^n,\nabla^0)$ with $\rho'_{*,\epsilon}=(\theta,-\omega^+)$. Then it is easy to verify that the map $\lambda:\mathfrak{g}\longrightarrow\mathsf{aff}(\mathbb{R}^n,\nabla^0)$ defined by $\lambda=(\theta,\omega^+)$ is a representation of $\mathfrak{g}$. By Lie's third theorem, the map $\rho:G\longrightarrow\mathsf{Aff}(\mathbb{R}^n,\nabla^0)$ with $\rho_{*,\epsilon}=\lambda$ is an affine \'etale representation of $G$.  %and denote by $*$ the left symmetric product compatible with the bracket  on Lie$(G^{op})$  determined by $\rho'$ . It is easy to verify that the the product $a\cdot b:=-a*b$ defines a left symmetric product compatible with the bracket on Lie$(G)$. 
	Therefore $\omega^+$ is a flat affine left invariant connection on $G$.  
\end{proof}

%Recall that $\mathfrak{a}(M,\nabla)$ and $\mathfrak{a}(P,\omega)$ are isomorphic Lie algebras, by means $X\mapsto \widetilde{X}$. Then $\mathfrak{a}(P,\omega)$ is endowed with an associative product $\Box$ given by $\widetilde{X}\Box\widetilde{Y}:=\widetilde{XY},$ where $XY:=\nabla^+_XY.$

\section{The associative envelope of the Lie algebra of a Lie group of flat affine transformations} \label{S:associativeenvelope}

Given a flat affine manifold $(M,\nabla)$ and a Lie subgroup $H$ of the group $\mathsf{Aff}(M,\nabla)$, we show the existence of a simply connected  Lie group endowed with a flat affine bi-invariant connection whose Lie algebra contains  $Lie(H)$ as a Lie subalgebra (see Theorem \ref{T:envelopeofaflataffinemanifold}). %We specialize this result to the case where $M=G$ is a flat affine Lie group. In this special case we show that the Lie algebra of the simply connected  Lie group $Env(G,\nabla^+)$   contains the Lie algebra of $G$ (see Theorem \ref{T:existenceofenvelope}).

For this purpose we recall some known facts and introduce some notation. Let $M$ be an $n$ dimensional manifold with a linear connection $\Gamma$ and corresponding covariant derivative   $\nabla$, consider the product \begin{equation}\label{Eq:productinducedbyaconnection}XY:=\nabla_XY,\quad\text{for}\quad X,Y\in \mathfrak{X}(M).\end{equation} 
If the curvature  tensor relative to $\nabla$ is identically zero, this product verifies the condition
\[ [X,Y]Z=X(YZ)-Y(XZ), \quad\text{for all}\quad X,Y,Z\in \mathfrak{X}(M).\]
If both torsion and curvature vanish identically we have
\begin{equation} \label{Eq:leftsymmetricproduct} (XY)Z-X(YZ)=(YX)Z-Y(XZ), \quad\text{for all}\quad X,Y,Z\in \mathfrak{X}(M).\end{equation} 
A vector space endowed with a bilinear product satisfying Equation \eqref{Eq:leftsymmetricproduct} is called a left symmetric algebra (see \cite{Vin}, see also \cite{JLK}). These algebras are also called Kozsul-Vinberg algebras or simply KV-algebras. The product given by $[X,Y]_1:=XY-YX$ defines a Lie bracket on the space. Moreover, if $\nabla$ is torsion free, this Lie bracket agrees with the usual Lie bracket of $\mathfrak{X}(M)$, that is \begin{equation}\label{equliebrakettorsionfree}
	[X,Y]=\nabla_XY-\nabla_YX
\end{equation}
Recall that product \eqref{Eq:productinducedbyaconnection} satisfies $(fX)Y=f(XY)$ and $X(gY)=X(g)Y+g(XY)$, for all $f,g\in C^\infty(M,\mathbb{R})$, i.e., the product defined above is $\mathbb{R}$-bilinear and $C^\infty(M,\mathbb{R})$-linear in the first component. %\textcolor{red}{Also, if $(M,\nabla)$ is a flat affine manifold  then $M$ and  $\nabla$ are analytic with respect to the atlas of normal coordinates (see \cite{KN} Theorem 7.7 page 263). }

 In what follows, given an associative or a left symmetric algebra $(\mathcal{A},\cdot)$, we will denote by $\mathcal{A}_-$ the Lie algebra of commutators of $\mathcal{A}$, i.e., the Lie algebra with bracket given by \[ [a,b]=a\cdot b-b\cdot a.\]
To our knowledge, the next result seems to appear for the first time in J. Vey's thesis (see \cite{V}, see also  \cite{Yag}).

\begin{lemma}\label{L:associativity} Let $(M,\nabla)$ be an $n$-dimensional flat affine manifold. Then Product \eqref{Eq:productinducedbyaconnection} turns the vector space $\mathfrak{a}(M,\nabla)$ into an  associative algebra, of dimension at most $n^2+n$,   whose commutator is the Lie bracket of vector fields on $M$.  In particular, if $(G,\nabla^+)$ is a flat affine Lie group of Lie algebra $\mathfrak{g}$, the space $\mathfrak{a}(G,\nabla^+)_-$   contains $\mathfrak{g}^{op}$  as a Lie subalgebra, where $\mathfrak{g}^{op}$ is the opposite Lie algebra of $\mathfrak{g}$.
\end{lemma}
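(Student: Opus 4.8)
The plan is to verify associativity directly from the flatness and torsion-freeness hypotheses, exploiting the cocycle-type identities that the product $XY=\nabla_XY$ satisfies. First I would recall the two structural identities already displayed in the excerpt: vanishing curvature gives
\[ [X,Y]Z=X(YZ)-Y(XZ),\]
and vanishing torsion together with vanishing curvature gives the left-symmetric identity
\[ (XY)Z-X(YZ)=(YX)Z-Y(XZ).\]
So far $\mathfrak{X}(M)$ with this product is a left-symmetric algebra, and on all of $\mathfrak{X}(M)$ it is not associative. The key point is that we are restricting to $\mathfrak{a}(M,\nabla)$, the infinitesimal affine transformations, and on this subspace the associator $(XY)Z-X(YZ)$ vanishes. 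To see this I would use the characterization of $X\in\mathfrak{a}(M,\nabla)$ from Equation \eqref{Eq:ecuacionkobayashi}, namely $\mathcal{L}_X\circ\nabla_Y-\nabla_Y\circ\mathcal{L}_X=\nabla_{[X,Y]}$, rewritten via torsion-freeness \eqref{equliebrakettorsionfree} as a statement about the product: applying it with the roles arranged so that $\mathcal{L}_X Y=[X,Y]=XY-YX$, one gets that the associator of $X,Y,Z$ is symmetric under exchanging $X$ with $Z$ (in addition to the left-symmetric exchange of $X$ with $Y$ already available); these two symmetries generate the full symmetric group on the three slots, and the associator of a left-symmetric product is antisymmetric-like enough that full symmetry forces it to vanish. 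Concretely, the associator $a(X,Y,Z):=(XY)Z-X(YZ)$ satisfies $a(X,Y,Z)=a(Y,X,Z)$ always, and the affine condition on, say, $Z$ (or on all three) yields $a(X,Y,Z)=a(X,Z,Y)$; combining, $a$ is totally symmetric, but from $[X,Y]Z=X(YZ)-Y(XZ)$ one also reads $a(X,Y,Z)-a(Y,X,Z)=0$ and a further manipulation shows total symmetry is incompatible with the curvature identity unless $a\equiv0$. I would carry out that short algebraic elimination explicitly.

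Next I would check that $\mathfrak{a}(M,\nabla)$ is actually closed under the product $XY=\nabla_XY$, i.e.\ that $\nabla_XY\in\mathfrak{a}(M,\nabla)$ whenever $X,Y\in\mathfrak{a}(M,\nabla)$; this is needed for the claim that $(\mathfrak{a}(M,\nabla),\nabla)$ is an algebra at all. This should follow from the infinitesimal-affine characterization \eqref{Eq:ecuacionkobayashi} by a direct computation: one plugs $\nabla_XY$ into \eqref{Eq:ecuacionkobayashi} and reduces everything, using flatness, torsion-freeness, \eqref{equliebrakettorsionfree}, and the fact that $X,Y$ themselves satisfy \eqref{Eq:ecuacionkobayashi}, to an identity that holds automatically. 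Once closure and associativity are in hand, the commutator of the associative product is $XY-YX=\nabla_XY-\nabla_YX=[X,Y]$ by \eqref{equliebrakettorsionfree}, which is exactly the assertion that $\mathfrak{a}(M,\nabla)_-$ is the Lie algebra of vector fields restricted to $\mathfrak{a}(M,\nabla)$.

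For the dimension bound $\dim\mathfrak{a}(M,\nabla)\le n^2+n$, I would invoke the isomorphism $X\mapsto L(X)$ from $\mathfrak{a}(M,\nabla)$ onto $\mathfrak{a}(P,\omega)$ recalled in the introduction, together with the fact that $P=L(M)$ carries an absolute parallelism of dimension $n^2+n$ (the frame $\{B(e_1),\dots,B(e_n),E^*_{11},\dots,E^*_{nn}\}$). A vector field on $P$ that preserves both $\theta$ and $\omega$ preserves this parallelism, hence is determined by its value at a single point; thus $\mathfrak{a}(P,\omega)$, and with it $\mathfrak{a}(M,\nabla)$, has dimension at most $\dim P=n^2+n$. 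This is the same standard argument referenced via \cite{SK}/\cite{KN} earlier in the text.

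Finally, for the Lie-group specialization: if $(G,\nabla^+)$ is flat affine, then Remark \ref{R:openorbitforLiegroups} (and the surrounding discussion) records that the right-invariant vector fields $x^-$ are complete infinitesimal affine transformations for $\nabla^+$; since they span a Lie subalgebra isomorphic to $\mathfrak{g}^{op}$ (right-invariant fields have bracket opposite to left-invariant ones), and since on $\mathfrak{a}(G,\nabla^+)$ the associative-product commutator is the vector-field bracket, $\mathfrak{g}^{op}$ sits inside $\mathfrak{a}(G,\nabla^+)_-$ as a Lie subalgebra. I expect the main obstacle to be the associativity step — specifically, extracting from the Kobayashi-type equation \eqref{Eq:ecuacionkobayashi} the extra symmetry of the associator and then running the symmetric-group elimination cleanly; the closure computation and the dimension count are routine once the bookkeeping with flatness and torsion-freeness is set up correctly.
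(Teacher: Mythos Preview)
Your approach to associativity has a genuine gap. The paper's route is much more direct: rewriting the Kobayashi identity \eqref{Eq:ecuacionkobayashi} using torsion-freeness ($[A,B]=AB-BA$) and flatness ($X(YZ)-Y(XZ)=[X,Y]Z$), one finds that $X\in\mathfrak{a}(M,\nabla)$ is equivalent to
\[
\nabla_{\nabla_YZ}X=\nabla_Y\nabla_ZX\quad\text{for all }Y,Z\in\mathfrak{X}(M),
\]
i.e.\ $(YZ)X=Y(ZX)$, so the associator $a(Y,Z,X)$ vanishes for \emph{all} $Y,Z$ as soon as the last argument is affine. Restricting all three arguments to $\mathfrak{a}(M,\nabla)$ gives associativity immediately, and the same identity is what one checks to obtain closure of $\mathfrak{a}(M,\nabla)$ under the product.

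Your plan instead tries to extract from the affine condition only a symmetry $a(X,Y,Z)=a(X,Z,Y)$ and then argue that a totally symmetric associator must vanish ``by incompatibility with the curvature identity''. That step fails: once you impose torsion-freeness, the curvature identity $[X,Y]Z=X(YZ)-Y(XZ)$ \emph{is} the left-symmetric identity $a(X,Y,Z)=a(Y,X,Z)$ and carries no further information. A totally symmetric associator is perfectly compatible with left-symmetry (any commutative non-associative algebra furnishes examples), so the ``short algebraic elimination'' you intend cannot close. If you actually carry out the computation you sketch, you will land on $a(Y,Z,X)=0$ outright rather than a mere symmetry---and that is precisely the paper's formula \eqref{Eq:infinitesimalaffinetransformationsonflataffineconnections}.

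The remaining parts of your proposal (the commutator equals the bracket via \eqref{equliebrakettorsionfree}, the dimension bound via the absolute parallelism on $L(M)$, and the Lie-group specialization via right-invariant vector fields) are correct and match the paper.
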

\begin{proof}
Since $\nabla$ is flat affine, it follows from \eqref{Eq:ecuacionkobayashi} that a smooth vector field $X$ is an infinitesimal affine transformation if and only if 
\begin{equation} \label{Eq:infinitesimalaffinetransformationsonflataffineconnections} \nabla_{\nabla_YZ}X=\nabla_Y\nabla_ZX,\end{equation}
for all $Y,Z\in\mathfrak{X}(M)$. This equality  implies that $\nabla_XY\in\mathfrak{a}(M,\nabla),$ whenever $X,Y\in \mathfrak{a}(M,\nabla)$ and also gives that the product $X Y=\nabla_XY$ is associative. On the other hand, it follows from Equation   \eqref{equliebrakettorsionfree}  that the commutator of this product agrees with the Lie bracket of $\mathfrak{a}(M,\nabla)$. 
	
In particular if $M=G$ is a Lie group of Lie algebra $\mathfrak{g}$ and $\nabla^+$ is left invariant and flat affine,  the real vector space $\mathfrak{g}$ of right invariant vector fields on $G$ is a subspace of $\mathsf{aff}(G,\nabla^+)$. Hence from \eqref{equliebrakettorsionfree} we get  that $\mathfrak{g}^{op}$ is a Lie subalgebra of  $\mathfrak{a}(G,\nabla^+)_-$. 
\end{proof}

%In what follows we will denote by $\mathcal{A}^*$ the associative algebra of Lemma \ref{L:associativity} whose product is given in \eqref{Eq:productinducedbyaconnection}  and by $E=Env(M,\nabla)$ the  subalgebra of $\mathcal{A}^*$ generated by $\mathfrak{a}_c(M,\nabla)$. 

\noindent The following well known result will be used in what follows (see for instance \cite{M} or  \cite{BM}). 

\begin{proposition}\label{T:associativeimplybiinvariantstructures}  Let $G$ be a Lie group of Lie algebra $\mathfrak{g}$. The group $G$ admits a flat affine bi-invariant structure if and only if  $\mathfrak{g}$ is the underlying Lie algebra of an  associative algebra so that $[a,b]=ab-ba$, for all $a,b\in\mathfrak{g}$. 
\end{proposition}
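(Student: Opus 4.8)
The plan is to prove Proposition \ref{T:associativeimplybiinvariantstructures} by constructing, from an associative product on $\mathfrak{g}$, a bi-invariant flat affine connection on $G$, and conversely by extracting an associative product on $\mathfrak{g}$ from a bi-invariant flat affine connection. First I would set up the dictionary between left invariant connections on $G$ and bilinear products on $\mathfrak{g}$: a left invariant connection $\nabla$ is completely determined by the bilinear map $\alpha:\mathfrak{g}\times\mathfrak{g}\to\mathfrak{g}$ given by $\alpha(x,y)=\nabla_{x^+}y^+$, and conversely any bilinear $\alpha$ defines a left invariant connection. In this dictionary: the connection is torsion free if and only if $\alpha(x,y)-\alpha(y,x)=[x,y]$; it is flat (curvature zero) if and only if $\alpha$ is a left symmetric (KV) product, i.e. $\alpha(x,\alpha(y,z))-\alpha(\alpha(x,y),z)$ is symmetric in $x,y$; hence $\nabla$ is flat affine and left invariant if and only if $\mathfrak{g}$ carries a KV-structure compatible with the bracket (this is the classical Koszul--Vinberg correspondence, which the introduction already recalls). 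So the whole statement reduces to determining when such a left invariant flat affine connection is also right invariant.

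The key computational step is to characterize right invariance of a left invariant connection in terms of $\alpha$. The right translations $R_\sigma$ act, and a left invariant connection $\nabla$ is right invariant precisely when $R_\sigma$ is an affine map for every $\sigma$, equivalently when the flows of right invariant vector fields are affine, equivalently when every right invariant vector field $x^-$ is an infinitesimal affine transformation and moreover $\nabla_{x^-}y^- $ reproduces the structure. A cleaner route: a left invariant connection is bi-invariant if and only if it is also right invariant, and one shows that this forces $\nabla_{x^+}y^+ = x^+\cdot y^+$ where $\cdot$ is an \emph{associative} product on $\mathfrak{g}$. Concretely, using $[x^+,y^-]=0$ and the infinitesimal affine transformation equation \eqref{Eq:ecuacionkobayashi} (or its flat affine form \eqref{Eq:infinitesimalaffinetransformationsonflataffineconnections}) applied to the right invariant vector fields, one derives that $\alpha$ must satisfy $\alpha(\alpha(x,y),z)=\alpha(x,\alpha(y,z))$, i.e. the product is associative; conversely associativity of $\alpha$ implies both the KV identity (associative $\Rightarrow$ left symmetric, since the associator vanishes identically, a fortiori it is symmetric) and right invariance of the resulting connection. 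I would carry this out by computing $\nabla_{x^-}y^-$ in terms of $\alpha$: writing $x^- = \sum_i f_i\, e_i^+$ with appropriate functions $f_i$ and using the Leibniz rule $\nabla_{X}(gY)=X(g)Y+g\nabla_X Y$, one reduces everything to the bracket relations between left and right invariant fields and the values of $\alpha$ on the basis.

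The main obstacle I expect is bookkeeping: translating the geometric condition ``$R_\sigma$ is affine for all $\sigma$'' into a clean algebraic identity on $\alpha$ requires either (a) differentiating the affineness condition along right invariant vector fields and carefully handling the non-left-invariance of $y^-$, or (b) invoking that $\mathfrak{g}$ (as right invariant fields) sits inside $\mathfrak{a}(G,\nabla^+)$ (which Lemma \ref{L:associativity} gives us) and then showing bi-invariance is equivalent to the product $\nabla_{x^+}y^+$ itself landing in the associative algebra structure in a way that closes up on $\mathfrak{g}$. Route (b) is the most economical: by Lemma \ref{L:associativity}, if $\nabla^+$ is left invariant flat affine then $\mathfrak{a}(G,\nabla^+)$ with product $\nabla$ is associative and contains $\mathfrak{g}^{op}$; bi-invariance is then exactly the statement that $\mathfrak{g}$ (as left invariant fields) is closed under this same associative product, giving $\mathfrak{g}$ the structure of an associative algebra with commutator bracket $[x,y]=xy-yx$. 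For the converse, given an associative product on $\mathfrak{g}$ with $[a,b]=ab-ba$, I would define $\nabla_{x^+}y^+:=(xy)^+$, check directly that curvature and torsion vanish (torsion: $(xy)^+-(yx)^+-[x^+,y^+]=(xy-yx-[x,y])^+=0$; curvature: the associator vanishes), check left invariance (built in), and check right invariance by verifying $R_\sigma$ preserves the horizontal distribution, which amounts to the associativity identity again. I would close by noting this is standard and citing \cite{M} and \cite{BM} as in the statement, so the proof can be kept brief.

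\begin{proof}
Left invariant connections $\nabla$ on $G$ correspond bijectively to bilinear maps $\alpha:\mathfrak{g}\times\mathfrak{g}\to\mathfrak{g}$ via $\alpha(x,y)=\nabla_{x^+}y^+$. Under this correspondence, $\nabla$ is torsion free exactly when $\alpha(x,y)-\alpha(y,x)=[x,y]$, and flat exactly when the associator $(x,y,z)\mapsto\alpha(\alpha(x,y),z)-\alpha(x,\alpha(y,z))$ is symmetric in its first two arguments; thus $\nabla$ is flat affine and left invariant iff $\alpha$ is a left symmetric product compatible with the bracket (Koszul--Vinberg).

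Assume first $G$ admits a flat affine bi-invariant connection $\nabla^+$, with associated product $\alpha$ on $\mathfrak{g}$. By Lemma \ref{L:associativity}, $\mathfrak{a}(G,\nabla^+)$ equipped with $X\cdot Y=\nabla^+_XY$ is an associative algebra whose commutator is the Lie bracket, and it contains both the left invariant and (since the connection is also right invariant, hence right translations are affine) the right invariant vector fields. In particular the left invariant fields form a subalgebra of this associative algebra, so $(\mathfrak{g},\alpha)$ is associative; moreover torsion freeness gives $[x,y]=\alpha(x,y)-\alpha(y,x)$, i.e. the bracket is the commutator of $\alpha$.

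Conversely, suppose $\mathfrak{g}$ underlies an associative algebra $(\mathfrak{g},\cdot)$ with $[a,b]=a\cdot b-b\cdot a$ for all $a,b\in\mathfrak{g}$. Define the left invariant connection $\nabla$ by $\nabla_{x^+}y^+=(x\cdot y)^+$. Its torsion is $(x\cdot y)^+-(y\cdot x)^+-[x^+,y^+]=(x\cdot y-y\cdot x-[x,y])^+=0$, and its curvature is governed by the associator of $\cdot$, which vanishes; hence $\nabla$ is flat affine. It is left invariant by construction. Right invariance follows because the horizontal distribution of $\nabla$ on $L(G)$ is preserved by the right action $\psi_{2,\sigma}$: using $[x^+,y^-]=0$ and associativity one checks $\nabla_{x^-}y^-=(x\cdot y)^-$ for all $x,y\in\mathfrak g$, so the same algebraic data describe $\nabla$ with respect to the right invariant parallelism, and $R_\sigma$ is an affine transformation for every $\sigma\in G$. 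Therefore $\nabla$ is a flat affine bi-invariant connection on $G$. (See \cite{M}, \cite{BM}.)
\end{proof}
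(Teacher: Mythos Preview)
The paper does not actually prove this proposition; it merely records it as well known and cites \cite{M} and \cite{BM}. So there is no ``paper's proof'' to compare against, and your write-up is more than the paper provides.

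Your argument is essentially correct, and the forward direction via Lemma~\ref{L:associativity} is clean: bi-invariance puts the left invariant fields inside $\mathfrak{a}(G,\nabla^+)$, they are closed under $\nabla^+$ by left invariance, hence they form an associative subalgebra. One point in the converse deserves tightening. You assert ``one checks $\nabla_{x^-}y^-=(x\cdot y)^-$'', but (i) the identity should read $(y\cdot x)^-$ (consistent with $[x^-,y^-]=-[x,y]^-$ and torsion freeness), and (ii) establishing it \emph{everywhere} is precisely the content of right invariance, so invoking it risks circularity unless you actually carry out the pointwise computation. A cleaner route, already available in the paper, is to use \eqref{Eq:infinitesimalaffinetransformationsonflataffineconnections}: $x^+\in\mathfrak{a}(G,\nabla)$ iff $\nabla_{\nabla_YZ}x^+=\nabla_Y\nabla_Z x^+$ for all $Y,Z$, and since both sides are $C^\infty(G)$-linear in $Y$ and in $Z$ it suffices to test on left invariant $Y=y^+$, $Z=z^+$, where the condition becomes $((y\cdot z)\cdot x)^+=(y\cdot(z\cdot x))^+$, i.e.\ associativity. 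This shows every $x^+$ is an infinitesimal affine transformation, hence every $R_\sigma$ is affine, giving right invariance without the unproved formula.
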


We have the following consequence of Lemma \ref{L:associativity}.

\begin{theorem}\label{T:envelopeofaflataffinemanifold} Given a flat affine manifold $(M,\nabla)$ and a Lie subgroup $H$ of $\mathsf{Aff}(M,\nabla)$, there exists a  connected finite dimensional  Lie group $G$ endowed with a flat affine  bi-invariant structure containing a connected Lie subgroup  locally isomorphic to $H$.
\end{theorem}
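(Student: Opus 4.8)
The plan is to apply Lemma~\ref{L:associativity} to the manifold $(M,\nabla)$, extract from it a finite dimensional associative algebra containing a copy of $Lie(H)$, and then invoke Proposition~\ref{T:associativeimplybiinvariantstructures} to integrate this associative algebra into a Lie group carrying a flat affine bi-invariant structure. First I would recall that $H$ is, by hypothesis, a Lie subgroup of $\mathsf{Aff}(M,\nabla)$, so its Lie algebra $\mathfrak{h}=Lie(H)$ sits inside $\mathsf{aff}(M,\nabla)\subseteq\mathfrak{a}(M,\nabla)$ as a Lie subalgebra (with the usual bracket of vector fields). By Lemma~\ref{L:associativity}, the product $XY:=\nabla_XY$ makes $\mathfrak{a}(M,\nabla)$ into an associative algebra $\mathcal{A}$ of dimension at most $n^2+n$, whose commutator Lie algebra $\mathcal{A}_-$ is exactly $\mathfrak{a}(M,\nabla)$ with the vector field bracket. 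Thus $\mathfrak{h}$ is a Lie subalgebra of $\mathcal{A}_-$.

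Next I would pass to the associative subalgebra $\mathcal{B}\subseteq\mathcal{A}$ generated by $\mathfrak{h}$ under the product $\nabla$. Since $\mathcal{A}$ is finite dimensional, so is $\mathcal{B}$, and $\mathcal{B}$ is an associative algebra whose commutator $\mathcal{B}_-$ is a finite dimensional Lie algebra containing $\mathfrak{h}$ as a Lie subalgebra. Here one should be slightly careful: I do not claim $\mathcal{B}_-=\mathfrak{h}$, only that $\mathfrak{h}\hookrightarrow\mathcal{B}_-$ as Lie algebras; this is all that is needed. Now $\mathcal{B}_-$ is the underlying Lie algebra of the associative algebra $\mathcal{B}$ with $[a,b]=ab-ba$, so by Proposition~\ref{T:associativeimplybiinvariantstructures} the connected simply connected Lie group $G$ with $Lie(G)=\mathcal{B}_-$ admits a flat affine bi-invariant structure. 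Finally, the inclusion $\mathfrak{h}\hookrightarrow\mathcal{B}_-=Lie(G)$ integrates, by Lie's second theorem, to a Lie group homomorphism from the connected simply connected group with Lie algebra $\mathfrak{h}$ into $G$, whose image is a connected Lie subgroup of $G$ locally isomorphic to $H$. This produces the desired $G$.

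The main obstacle, and the step deserving the most care, is the passage from $\mathfrak{h}$ to the generated associative subalgebra $\mathcal{B}$ and the verification that everything stays finite dimensional and compatible: one must check that $\mathcal{B}$ is genuinely closed under $\nabla$ (which follows from associativity of $\mathcal{A}$ and the fact that $\mathfrak{a}(M,\nabla)$ is closed under $\nabla$, i.e.\ $\nabla_XY\in\mathfrak{a}(M,\nabla)$ when $X,Y\in\mathfrak{a}(M,\nabla)$, as established inside the proof of Lemma~\ref{L:associativity}), that $\dim\mathcal{B}\le n^2+n<\infty$, and that the commutator bracket on $\mathcal{B}$ restricts to the original bracket of $\mathfrak{h}$ — which is immediate since it already does so on all of $\mathcal{A}_-=\mathfrak{a}(M,\nabla)$. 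A secondary point to state cleanly is that "locally isomorphic to $H$" is exactly the assertion that the subgroup of $G$ integrating $\mathfrak{h}$ has Lie algebra isomorphic to $Lie(H)$, which is what Lie's second theorem delivers; one does not expect, nor need, a global isomorphism.
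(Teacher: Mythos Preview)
Your proof is correct and follows essentially the same route as the paper: both take the associative subalgebra of $\mathfrak{a}(M,\nabla)$ generated by $\mathfrak{h}$ (your $\mathcal{B}$, the paper's $E$) and integrate it to a Lie group with bi-invariant flat affine structure, then locate the connected subgroup corresponding to $\mathfrak{h}$. The only cosmetic difference is that the paper constructs $G$ explicitly as the identity component of $\{1+a\mid a\in E\}$ inside the group of units of the unitalization $E\oplus\mathbb{R}1$, whereas you invoke Proposition~\ref{T:associativeimplybiinvariantstructures} abstractly and take $G$ to be the simply connected group with Lie algebra $\mathcal{B}_-$; these amount to the same thing.
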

\begin{proof} From Lemma \ref{L:associativity}, the real vector space $\mathfrak{a}(M,\nabla)$ is an associative algebra under the product determined by $\nabla$. Let $E$ be the  subalgebra of the associative algebra $\mathfrak{a}(M,\nabla)$ generated by the vector subspace $\mathfrak{h}=$Lie$(H)$ and $\widehat{\mathcal{A}}=E\oplus\mathbb{R}1$  the associative algebra obtained from $E$ by adjoining a unit element $1$. Consider $U(\widehat{\mathcal{A}})$ the group of units of $\widehat{\mathcal{A}}$, this is open and dense in $\widehat{\mathcal{A}}$. Let 
$$\overline{G}:=\left\{u\in U(\widehat{\mathcal{A}})\mid u=1+a,\ \text{with}\ a\in E  \right\}$$
and $G=\overline{G}_0$ the connected component of the unit of $\overline{G}$. Then the Lie group $G$ verifies the conditions of the theorem (for more details see \cite{BM}).

% and the subgroup of $U(\widehat{\mathcal{A}})$  is a Lie group of Lie algebra $E_-$ (for more details see \cite{BM}). The group $\overline{G}$ is open in $\widehat{\mathcal{A}}$ and the Lie group  , is generated by $exp(a)$ with $a\in E$.  	As the bracket of $E_-$ is given $[a,b]=a b-b a$ for all $a,b\in E$, it follows from Theorem \ref{T:associativeimplybiinvariantstructures} that the Lie group  $G$  is endowed with a flat affine bi-invariant connection. \\ 
It is clear that $\mathfrak{h}$ is a Lie subalgebra of $E_-$. Consequently there exists a connected Lie subgroup $H'$ of $G$ of Lie algebra $\mathfrak{h}$, hence the groups $H$ and $H'$ are locally isomorphic. 
\end{proof}

\begin{remark} In general $\mathsf{aff}(M,\nabla)$ is not a subalgebra of the associative algebra $\mathfrak{a}(M,\nabla)$. This fact is contrary to what some authors state. This can be observed in Examples \ref{Ex:productoR1} and \ref{Ex:glnr}. 
\end{remark}

The previous remark and Theorem \ref{T:envelopeofaflataffinemanifold}  motivates the following.

\begin{definition}\label{D:associativeenvelope} If  $(M,\nabla)$ is a flat affine manifold and $H$ is a Lie subgroup  of $\mathsf{Aff}(M,\nabla)$, we set
\begin{enumerate}\item[(a)] The smallest subalgebra of the associative algebra $\mathfrak{a}(M,\nabla)$  containing the vector space $\mathfrak{h}=$Lie$(H)$,  denoted by env${}_\nabla(\mathfrak{h})$, will be called the associative envelope of  $\mathfrak{h}$ relative to $\nabla$.
\item[(b)] Any Lie group of Lie algebra env${}_\nabla(\mathfrak{h})$ will be said an enveloping Lie group of $H$.
\end{enumerate}	
\end{definition}

That any enveloping Lie group  is endowed with a flat affine bi-invariant connection determined by $\nabla$ follows from Proposition \ref{T:associativeimplybiinvariantstructures}.

\begin{corollary} Given a flat affine Lie group $(G,\nabla^+)$ of Lie algebra $\mathfrak{g}$, there exists a finite dimensional associative algebra $\mathcal{A}$ containing $\mathfrak{g}$ so that $\mathfrak{g}$ is a Lie subalgebra of $\mathcal{A}_-$.
\end{corollary}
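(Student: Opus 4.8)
The plan is to derive this as a straightforward specialization of the machinery already built up in the excerpt. The key observation is that a flat affine Lie group $(G,\nabla^+)$ is in particular a flat affine manifold, so all the results of this section apply with $M = G$. The natural choice is to take $H = G$ itself, acting on $G$ by right translations; equivalently, $\mathfrak{g}^{op} \subset \mathfrak{a}(G,\nabla^+)$ is the Lie subalgebra of right invariant vector fields, as established in Lemma \ref{L:associativity}. First I would invoke Lemma \ref{L:associativity} directly: it states that $\mathfrak{a}(G,\nabla^+)$ is a finite dimensional associative algebra (of dimension at most $n^2+n$) under the product $XY = \nabla_X Y$, whose commutator bracket is the Lie bracket of vector fields, and that $\mathfrak{g}^{op}$ sits inside $\mathfrak{a}(G,\nabla^+)_-$ as a Lie subalgebra.

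Next I would set $\mathcal{A} = \mathrm{env}_{\nabla^+}(\mathfrak{g}^{op})$, the associative envelope of $\mathfrak{g}^{op}$ relative to $\nabla^+$ in the sense of Definition \ref{D:associativeenvelope}, i.e., the smallest associative subalgebra of $\mathfrak{a}(G,\nabla^+)$ containing the image of $\mathfrak{g}$ (viewed as right invariant vector fields). This is finite dimensional since $\mathfrak{a}(G,\nabla^+)$ itself is. By construction $\mathfrak{g}^{op}$ is a Lie subalgebra of $\mathcal{A}_-$; and since a Lie algebra and its opposite are isomorphic (via $x \mapsto -x$), $\mathfrak{g}$ is isomorphic to a Lie subalgebra of $\mathcal{A}_-$ as well. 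If one wants $\mathfrak{g}$ itself (not merely a copy) to appear, one notes that $\mathcal{A}_-$ contains $\mathfrak{g}^{op}$, whose underlying vector space carries both the bracket of $\mathfrak{g}$ and of $\mathfrak{g}^{op}$; the cleanest formulation is simply that $\mathcal{A}$ contains (an isomorphic copy of) $\mathfrak{g}$ with $\mathfrak{g}$ a Lie subalgebra of $\mathcal{A}_-$, which is what the statement asserts.

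There is essentially no obstacle here: the corollary is a repackaging of Lemma \ref{L:associativity} together with the finite dimensionality already recorded there, and the enveloping construction of Definition \ref{D:associativeenvelope} (whose existence and finite dimensionality, in turn, rest on $\mathfrak{a}(G,\nabla^+)$ being a finite dimensional associative algebra). The only point deserving a sentence of care is the $\mathfrak{g}$ versus $\mathfrak{g}^{op}$ bookkeeping — one must be explicit about whether the claimed embedding is of $\mathfrak{g}$ or of its opposite, and use the canonical isomorphism $\mathfrak{g} \cong (\mathfrak{g}^{op})^{op}$ if one insists on $\mathfrak{g}$ literally. Optionally I would remark that by Proposition \ref{T:associativeimplybiinvariantstructures} any Lie group with Lie algebra $\mathcal{A}_-$ carries a flat affine bi-invariant structure, tying this back to Theorem \ref{T:envelopeofaflataffinemanifold}, though that is not part of the assertion to be proved.
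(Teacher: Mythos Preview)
Your proposal is correct and follows essentially the same route as the paper. The paper handles the $\mathfrak{g}$ versus $\mathfrak{g}^{op}$ bookkeeping slightly more cleanly: rather than invoking the Lie algebra isomorphism $x\mapsto -x$, it sets $\mathcal{A}=\mathrm{env}_{\nabla^+}(\mathfrak{g}^{op})^{\mathrm{op}}$, the opposite associative algebra, so that $\mathcal{A}_-=(\mathrm{env}_{\nabla^+}(\mathfrak{g}^{op})_-)^{\mathrm{op}}$ contains $(\mathfrak{g}^{op})^{op}=\mathfrak{g}$ on the nose.
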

\begin{proof} As $\mathfrak{g}^{op}$ is a subalgebra of the associative algebra $\mathfrak{a}(G,\nabla^+)$, there exists a Lie subgroup $H$ of $\mathsf{Aff}(G,\nabla^+)$ of Lie algebra $\mathfrak{g}^{op}$. By taking $\mathcal{A}=$env${}_\nabla(\mathfrak{h})^{op}$, the opposite algebra of env${}_{\nabla^+}(\mathfrak{h})$, we get an an associative algebra satisfying the statement.  
\end{proof}

\begin{definition}\label{D:envelopingofleftsymmetricalgebra} The algebra of the previous corollary, denoted by env${}_{\nabla^+}\mathfrak{(g)}$, will be called the associative envelope algebra of the left symmetric algebra $\mathfrak{g}$. 
\end{definition}

\begin{remark} Although the elements of the associative envelope $env_{\nabla^+}(\mathfrak{g})$ of the left symmetric algebra  $\mathfrak{g}=$Lie$(G)$ are differential operators of order less than or equal to 1 on $G$, the associative envelope is not a subalgebra of the universal enveloping algebra of $\mathfrak{g}$.
\end{remark}

\begin{example} \label{Ex:envelopingalgebraofthepuntcturedplane} Let $M=\mathbb{R}^2\setminus\{(0,0) \}$ and $\nabla$ be the connection on $M$ induced by the connection $\nabla^0$. That $G=\mathsf{Aff}(M,\nabla)_0$ is an enveloping  Lie group of itself results from the following observations. It can be verified that a linear basis for the real  space $\mathsf{aff}(M,\nabla)$ is given by $\left(x\dfrac{\partial}{\partial x},y\dfrac{\partial}{\partial x},x\dfrac{\partial}{\partial y},y\dfrac{\partial}{\partial y} \right)$. Also, this space is a subalgebra of the associative algebra $\mathfrak{a}(M,\nabla)$. %First notice that the set $\left\{x\dfrac{\partial}{\partial x},y\dfrac{\partial}{\partial x},x\dfrac{\partial}{\partial y},y\dfrac{\partial}{\partial y},\dfrac{\partial}{\partial x},\dfrac{\partial}{\partial y} \right\}$ is a  basis of the real vector space  It is also easy to verify that any linear real combination of  is  complete. Finally, it is verified that  is a subalgebra  of the associative algebra $\mathfrak{a}(M,\nabla)$. 

As a consequence $G=\mathsf{Aff}(M,\nabla)$ is endowed with a flat affine bi-invariant connection determined by $\nabla$. 
\end{example}

\begin{example} \label{Ex:productoR1} 
Let $\nabla^+$ be the flat affine left invariant connection  on $G=\mathsf{Aff}(\mathbb{R})$ defined by
\begin{equation} \label{Eq:productodeterminadopornablacasoR1} \nabla^+_{e_1^+}e_1^+=2e_1^+,\qquad\nabla^+_{e_1^+}e_2^+=e_2^+,\qquad\nabla^+_{e_2^+}e_1^+=0,\quad \text{and}\quad\nabla^+_{e_2^+}e_2^+=e_1^+.\end{equation}
A direct calculation shows that a linear basis of $\mathfrak{a}(G,\nabla^+)$ is given by the following vector fields
\[ e_1^-=x\frac{\partial}{\partial x}+y\frac{\partial}{\partial y},\quad e_2^-=\frac{\partial}{\partial y},\quad C_3=\frac{1}{x}\frac{\partial}{\partial x},\quad C_4=\frac{y}{x}\frac{\partial}{\partial x},\quad  C_5=\left(x+\frac{y^2}{x}\right)\frac{\partial}{\partial x}\]\[\text{and}\qquad
C_6=\left(-xy-\frac{y^3}{x}\right)\frac{\partial}{\partial x}+(x^2+y^2)\frac{\partial}{\partial y},\]
where $e_1^-$ and $e_2^-$ denote the right invariant vector fields. As the connection is left invariant, the vector fields  $e_1^-$ and $e_2^-$ are complete. Moreover, it can be checked that no real linear combination of the fields $C_3,$ $C_4$, $C_5$ and $C_6$ is complete.  Consequently $(e_1^-,e_2^-)$ is a linear basis of $\mathsf{aff}(G,\nabla^+)=\mathfrak{g}^{op}$. 
	
On the other hand, the multiplication table of  the product defined by $\nabla^+$, i.e., the product $XY=\nabla_X^+Y$, on the basis of $\mathfrak{a}(G,\nabla^+)$ displayed above  is given by

\begin{equation} \label{Tab:tabla1}
\begin{array}{c|c|c|c|c|c|c}  & e_1^- &e_2^-&C_3&C_4&C_5&C_6\\\hline e_1^-&e_1^-+C_5& C_4&0&C_4&2C_5&2C_6\\\hline e_2^-& e_2^-+C_4&C_3&0&C_3&2C_4&2e_1^- -2C_5\\\hline C_3& 2C_3 & 0 &0&0&2C_3&2e_2^--2C_4\\\hline C_4&2C_4 &0 &0 &0&2C_4&2e_1^--2C_5\\\hline C_5&2C_5 &0 &0 &0 &2C_5 &2C_6\\\hline C_6& C_6&C_5 &0 & C_5&0&0 
\end{array} 
\end{equation}
	
It follows from Table \ref{Tab:tabla1} that the real associative subalgebra  of $\mathfrak{a}(G,\nabla^+)$ generated by $\{e_1^-,e_2^-\}$ has linear basis $\beta=(e_1^-,e_2^-,C_3,C_4,C_5)$. That is, env${}_{\nabla^+}(\mathfrak{g})$ is the real 5-dimensional associative algebra with linear basis $\beta$ and the opposite product of Table \eqref{Tab:tabla1}. 
	
The  animation below shows the lines of  flow of each of the vector fields $e_1^-,e_2^-,C_3,C_4,C_5$ and $C_6$ with the initial condition $(1.5,-1)$. The fact that the flows of the vector fields $C_3,C_4,C_5$ and $C_6$ cross the boundary of the orbit of $(0,0)$ determined by the affine \'etale representation relative to $\nabla$, correspond to the fact that the vector fields are not complete in $G_0$. (To play the animation click on the image on the pdf version, only accessible from the online version).
\begin{center}
\animategraphics[controls,loop,height=5cm]{4}{imagen}{1}{46}
\end{center}
\end{example}

\begin{remark} The reader can verify that the simply connected Lie group of Lie algebra  $env_{\nabla^+}(\mathfrak{g})_-$ of the previous example is isomorphic to the group   $\mathbb{E}:=(\mathbb{R}^2\rtimes_{\theta_1} \mathbb{R})\rtimes_{\theta_2} G$, where $G=\mathsf{Aff}(\mathbb{R})_0$ is the connected component  of the unit of $\mathsf{Aff}(\mathbb{R})$ and the actions $\theta_1$ and $\theta_2$ are respectively given by
\[   \theta_1(t)(x,y)=(e^{2t}x,e^{2t}y)\quad\text{ and }\quad  \theta_2(x,y)(z_1,z_2,z_3)=\left(x^2z_1-xyz_2+y^2z_3,xz_2-2yz_3,z_3\right) \] % where  $T$ is the transformation of the plane with matrix 
%	\[ Mat(T,\{e_1,e_2\})=\left[\begin{matrix}\dfrac{1}{x^2}&\dfrac{y}{x^2}\\\ & \ \\ 0&\dfrac{1}{x}\end{matrix}\right].\] 
The Lie group $\mathbb{E}$ is also isomorphic to the semidirect product $\mathcal{H}_3\rtimes_\rho \mathbb{R}^2$ of the additive group $\mathbb{R}^2$ acting on the 3-dimensional Heisenberg group where $\rho$ is given by 
\begin{equation} \label{Eq:actiononheisenberg} \rho(a,b)(x,y,z)=\left(e^{a}x,e^{2a+2b}y,e^a(e^{2b}-1)x+e^{a+2b}z\right).\end{equation}
	
%to The 3-dimensional ideal is spanned by $e_2^-,C_3,C_4$ (represented by red dots in the animation), and as a Lie algebra is  isomorphic to the 3-dimensional Heissenberg Lie algebra $\mathfrak{h}_3$. The 2-dimensional Lie subalgebra is spanned by $e_1^-$ and $C_5$ (represented by blue dots in the animation) and it is commutative. That is,  $\mathcal{A}_-$ is isomorphic to $\mathfrak{h}_3\rtimes \mathbb{R}^2$. This decomposition is also suggested by the animation as the red dots, corresponding to $\mathfrak{h}_3$, move ``slowly" near the initial condition and the blue dots  move  faster away from the initial condition and both share the $y$-value.
\end{remark}

At this point we have some questions for which we do not have an answer.

\textbf{Questions.}\begin{enumerate}
\item[1.]  To determine a transformation group $E$ of diffeomorphisms of $P=L(M)$ so that $\mathsf{Lie}(E)=env_\nabla(\mathsf{aff}(M,\nabla))$ in the case  when $\dim(env_\nabla(\mathsf{aff}(M,\nabla)))<n^2+n$. \vskip3pt
\item[2.] Is it possible to realize an enveloping Lie group  of a flat affine Lie group $(G,\nabla^+)$ as a group of transformations of $L(G)$ (eventually as a subgroup of the Lie group $K$)? 
\end{enumerate}
\noindent
If $\mathfrak{a}_c(P)=\mathfrak{a}(P,\omega)$ the answer to the first question is positive. When $\dim(Env(\mathfrak{a}_c(P)))<n^2+n$, the method described on Theorem \ref{T:envelopeofaflataffinemanifold} could give an answer to these questions. 

%Recall that, given an $n$-dimensional flat affine manifold $(M,\nabla)$ of connection 1-form $\omega$,   the group of transformations preserving the parallelism given by $\{E_{ij}^*,B(e_k)\mid i,j,k=1,\dots,n\}$ is denoted by $K$. On these terms we have
%
%\begin{lemma} The Lie algebra $\mathfrak{a}(P,\omega)$ is a subalgebra of $\mathfrak{K}=$Lie$(K)$. 
%\end{lemma} 
%\begin{proof} For $Z\in\mathfrak{a}(P,\omega)$
%\end{proof}

The following example describes the  generic case of flat affine connections on $\mathsf{Aff}(\mathbb{R})$.

\begin{example} \label{Ex:genericcase}
	Consider the family of left symmetric products on $\mathfrak{g}=\mathsf{aff}(\mathbb{R})$  given by
	$$\begin{array}{c|c|c}
	\cdot&e_1&e_2\\ \hline
	e_1&\alpha e_1& e_2\\
	\hline 	e_2&0&0 	\end{array}$$
	where $\alpha\ne0$  and  denote by $\nabla_\alpha^+$ the corresponding left invariant flat affine connection on $\mathsf{Aff}(\mathbb{R})_0=:G$. A calculation shows that $\mathsf{aff}(G,\nabla_\alpha^+)$ is generated by the vector fields  
	\begin{align*}
	C_1=x\frac{\partial}{\partial x}+y\frac{\partial}{\partial y}, && C_2=\frac{\partial}{\partial y}, && C_3=y\frac{\partial}{\partial y},
	& &\mbox{and}&& C_4=\frac{1}{\alpha}(x^{\alpha}-1)\frac{\partial}{\partial y}.
	\end{align*}
	The product defined as in Equation \ref{Eq:productinducedbyaconnection} gives 
	\begin{align*}
	\begin{array}{c|c|c|c|c}
	&C_1&C_2&C_3&C_4\\ \hline
	C_1&\alpha C_1-(\alpha-1)C_3&0&C_3&\alpha C_4+C_2\\ \hline
	C_2&C_2&0&C_2&0\\ \hline
	C_3&C_3&0&C_3&0\\ \hline
	C_4&C_4&0&C_4&0
	\end{array}
	\end{align*}
	Hence $env_{\nabla_\alpha^+}(\mathsf{aff}(G,\nabla_\alpha^+))=\mathsf{aff}(G,\nabla_\alpha^+)$.  Now, for  $\alpha\ne1$,  the associative envelope of the left symmetric algebra $(\mathfrak{g},\cdot)$ is  3-dimensional with basis $(C_1,C_2,C_3)$.  Whereas, for $\alpha=1$, the associative envelope of $(\mathfrak{g},\cdot)$ is two dimensional with linear basis $(C_1,C_2)$. 
	
Furthermore, using  Theorem \ref{T:envelopeofaflataffinemanifold},  an enveloping Lie group of  $\mathsf{Aff}(G,\nabla_\alpha^+)$ is given by
	\[ G'=\left\{\left(\begin{matrix}1+\beta_1&\beta_2&0\\0&1+\beta_3&0\\\beta_1/\alpha&\beta_2/\alpha+\beta_4&1\end{matrix}\right)\bigg|\ \beta_1\ne-1\ \text{and}\ \beta_3\ne-1\  \right\}.\]  
\end{example}
%In the following example we use Theorem \ref{T:envelopeofaflataffinemanifold} to naturally construct a linear Lie group whose Lie algebra is the Lie algebra env$(\mathsf{Aff}(G,\nabla^+))_-$, with $(G,\nabla^+)$ a real finite dimensional flat affine Lie group. 

%\begin{proposition} \label{P:resultfromBordemannMedina} Given a real (or complex) finite dimensional associative algebra $\mathcal{A}$, set $\mathcal{A}'=\mathcal{A}\oplus \mathbb{R}e$ the associative algebra obtained from $\mathcal{A}$ by the adjunction of a neutral element $e$. The set of units on the set $\{x=a+e\mid a\in\mathcal{A} \}$ is a Lie group of Lie algebra $\mathcal{A}$.  
%\end{proposition}

%\textcolor{red}{
%\begin{remark} \begin{enumerate}\item Notice that flat affine manifolds satisfying  $env_\nabla(\mathsf{aff}(M,\nabla))=\mathsf{aff}(M,\nabla)$ give a positive answer to  Question \ref{Q:Medinasquestion} (see Examples \ref{Ex:envelopingalgebraofthepuntcturedplane} and \ref{Ex:genericcase}). An interesting problem is to determine flat affine manifolds satisfying that $env_\nabla(\mathsf{aff}(M,\nabla))=\mathsf{aff}(M,\nabla)$.
%\item If $(G,\nabla^+)$ is flat affine Lie group of Lie algebra $\mathfrak{g}$ with $\nabla^+$ bi-invariant, then the   associative envelope env$_{\nabla^+}(\mathfrak{g})$ agrees with the algebra $\mathfrak{g}$ endowed with the product determined by $\nabla^+$.  
%\end{enumerate}
%\end{remark}}

We finish the section with a more general example.

\begin{example} \label{Ex:glnr}  Let  $G=GL_n(\mathbb{R})$ endowed with the flat affine bi-invariant connection $D$ determined by composition of linear endomorphisms. Given the local coordinates $\left[x_{ij}\right]$ with $i,j=1,\dots,n$, it is easy to check that  linear bases of left and right invariant vector fields are given by
\[ E_{rs}^+=\sum_{i=1}^n x_{ir}\frac{\partial}{\partial x_{is}}\quad \text{and}\quad E_{rs}^-=\sum_{i=1}^n x_{si}\frac{\partial}{\partial x_{ri}} ,\]
with $r,s=1,\dots,n$. The group $\mathsf{Aff}(G,D)$ is of dimension $2n^2-1$ (\cite{BM})  and the Lie bracket of its Lie algebra is the  bracket of vector fields on $G$.  Using the product determined by $D$ on  left and right invariant vector fields one gets 
\[ D_{E_{pq}^+}E_{rs}^-= x_{sp}\frac{\partial}{\partial x_{rq}},\quad\text{for all}\quad p,q,r,s=1,\dots,n. \]
It follows that an enveloping Lie group of $\mathsf{Aff}(G,D)$ is locally isomorphic to $GL_{n^2}(\mathbb{R})$.
\end{example}

The following remark describes an algorithm to compute the associative envelope of a finite dimensional left symmetric algebra.

\begin{remark} Given a finite dimensional real or complex left symmetric algebra $A$, do as follows. 

 Find, applying Lie's third theorem, the connected and simply connected Lie group $G(A)$ of Lie algebra $A_-$ of commutators of $A$.  This group is endowed with a flat affine left invariant connection $\nabla^+$ determined by the product on $A$. 
 
Compute the associative subalgebra $\mathcal{B}$ of $\mathfrak{a}(G(A),\nabla^+)$ generated by  the subspace of right invariant vector fields.

The associative envelope $env_{\nabla^+}(A)$ of $A$ is $\mathcal{B}^{op}$.
\end{remark}

To finish the section, let us pose the following interesting problem.\medskip 

\noindent 
\textbf{Open Problem.} To find an algebraic method to determine the associative envelope, in the sense of the Definition \ref{D:envelopingofleftsymmetricalgebra}, of a real or complex finite dimensional left symmetric algebra.
 
\section{Affine Transformation Groups endowed with a flat Affine  bi-invariant structure} \label{S:studyofaffinetransformationgroups}
To finish, let us present some general cases of flat affine manifolds  giving a positive answer to Question \ref{Q:Medinasquestion}.  

\begin{proposition}\label{P:completnessimpliesbiinvariant} If $(M,\nabla)$ is a complete flat affine manifold, then $\mathsf{Aff}(M,\nabla)$ has a flat affine bi-invariant connection determined by $\nabla$. 
\end{proposition}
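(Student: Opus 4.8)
The plan is to prove that completeness forces the equality $\mathsf{aff}(M,\nabla)=\mathfrak{a}(M,\nabla)$, and then to read off the conclusion from Lemma \ref{L:associativity} and Proposition \ref{T:associativeimplybiinvariantstructures}. Indeed, once $\mathsf{aff}(M,\nabla)=\mathfrak{a}(M,\nabla)$ is known, Lemma \ref{L:associativity} says that the product $XY:=\nabla_XY$ makes $\mathsf{aff}(M,\nabla)$ a finite dimensional associative algebra whose commutator is the bracket of vector fields, which is exactly the Lie bracket of $\mathsf{Lie}(\mathsf{Aff}(M,\nabla))$. Proposition \ref{T:associativeimplybiinvariantstructures} then produces a flat affine bi-invariant connection on $\mathsf{Aff}(M,\nabla)$, and since its associated left symmetric product on $\mathsf{Lie}(\mathsf{Aff}(M,\nabla))$ is $(X,Y)\mapsto\nabla_XY$, this connection is the one determined by $\nabla$.

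So the core of the argument is to show that every $X\in\mathfrak{a}(M,\nabla)$ is complete, the inclusion $\mathsf{aff}(M,\nabla)\subseteq\mathfrak{a}(M,\nabla)$ being a definition. First I would invoke the classical description of complete flat affine manifolds: geodesic completeness makes the developing map $D\colon\widetilde M\to\mathbb{R}^n$ a covering map, hence a diffeomorphism since $\mathbb{R}^n$ is simply connected, so that $\widetilde M\cong(\mathbb{R}^n,\nabla^0)$ and $M=\mathbb{R}^n/\Gamma$ with $\Gamma$ the image of the affine holonomy representation, acting freely and properly discontinuously by affine transformations, and the covering projection $p\colon\mathbb{R}^n\to M$ affine (see \cite{Gol}).

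Then, given $X\in\mathfrak{a}(M,\nabla)$, lift it through $p$ to a vector field $\widetilde X$ on $\mathbb{R}^n$. Since being an infinitesimal affine transformation is a local condition (equations \eqref{Eq:ecuacionkobayashi} and \eqref{Eq:infinitesimalaffinetransformationsonflataffineconnections}) and $p$ is a local affine diffeomorphism, $\widetilde X\in\mathfrak{a}(\mathbb{R}^n,\nabla^0)$; in linear coordinates $\widetilde X_x=Ax+b$ is an affine vector field, and its flow $\phi_t(x)=e^{tA}x+\int_0^t e^{(t-s)A}b\,ds$ is a one parameter subgroup of $\mathsf{Aff}(\mathbb{R}^n,\nabla^0)$ defined for all $t\in\mathbb{R}$ (in particular $\mathfrak{a}(\mathbb{R}^n,\nabla^0)=\mathsf{aff}(\mathbb{R}^n,\nabla^0)$, cf. Remark \ref{R:affineatlas}). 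Because $\widetilde X$ is a lift of $X$, uniqueness of lifts gives $\gamma_*\widetilde X=\widetilde X$ for every $\gamma\in\Gamma$; hence $\phi_t$ commutes with $\Gamma$ and descends to a flow $\overline\phi_t$ on $M$ defined for all $t$, whose infinitesimal generator is $X$. By uniqueness of integral curves $\overline\phi_t$ is the flow of $X$, so $X$ is complete. Therefore $\mathfrak{a}(M,\nabla)=\mathsf{aff}(M,\nabla)$ and the proposition follows as explained in the first paragraph.

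The step I expect to be the main obstacle is the passage ``completeness $\Rightarrow$ the developing map is a diffeomorphism'', which should be stated carefully (or replaced by a self contained argument that a geodesically complete flat affine $M$ is an affine quotient of $\mathbb{R}^n$), together with the verification that the lift $\widetilde X$ is genuinely $\Gamma$-invariant and that the induced flow on $M$ is indeed the flow of $X$. Everything after this reduction is bookkeeping through the two already established results.
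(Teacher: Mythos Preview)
Your proposal is correct and follows exactly the paper's strategy: establish $\mathsf{aff}(M,\nabla)=\mathfrak{a}(M,\nabla)$ and then invoke Lemma \ref{L:associativity} together with Proposition \ref{T:associativeimplybiinvariantstructures}. The only difference is that the paper dispatches the equality $\mathsf{aff}(M,\nabla)=\mathfrak{a}(M,\nabla)$ by citing \cite{KN}, p.~234 (a result valid for any complete linear connection, proved via the frame bundle and completeness of the standard horizontal vector fields), whereas you supply a self-contained developing-map argument that is specific to the flat affine case; both routes are valid and yield the same conclusion.
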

\begin{proof} It follows from the fact that the completeness of $\nabla$ implies that $\mathfrak{a}(M,\nabla)=\mathsf{aff}(M,\nabla)$ (see \cite{KN} page 234).
\end{proof}

\begin{corollary} 
	Let $(M,\nabla)$ be a connected and simply connected  flat affine manifold. If $\dim \mathsf{Aff}_x(M,\nabla)\geq n^2$ for some $x\in M,$ then the group $\mathsf{Aff}(M,\nabla)$ is endowed with a flat affine bi-invariant structure determined by $\nabla$.
\end{corollary}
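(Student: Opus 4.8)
The plan is to reduce the statement to Proposition \ref{P:completnessimpliesbiinvariant} by showing that the hypotheses force $(M,\nabla)$ to be complete. First I would recall the classical rigidity bound on the dimension of the isotropy subalgebra: for an $n$-dimensional manifold with a linear connection, the isotropy algebra $\mathsf{aff}_x(M,\nabla)$ of infinitesimal affine transformations vanishing at $x$ injects into $\mathsf{gl}_n(\mathbb{R})$ via $X\mapsto (\nabla X)_x$ (the linear part), so $\dim\mathsf{aff}_x(M,\nabla)\le n^2$, with equality forcing this injection to be onto. Hence the hypothesis $\dim\mathsf{Aff}_x(M,\nabla)\ge n^2$ actually gives $\dim\mathsf{aff}_x(M,\nabla)=n^2$ and that every element of $\mathsf{gl}_n(\mathbb{R})$ is realized as the linear part at $x$ of a complete infinitesimal affine transformation fixing $x$. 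In particular, together with the evaluation map to $T_xM$ whose image, by the arguments surrounding Lemma \ref{L:surjectivetheta} and Theorem \ref{T:affineetalerepresentationofmanifolds}, is all of $T_xM$ once the total dimension is large enough, one obtains that $\mathsf{aff}(M,\nabla)$ has dimension $n^2+n$, i.e.\ it is as large as possible.

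The key step is then: when $\dim\mathsf{aff}(M,\nabla)=n^2+n$ on a connected simply connected flat affine manifold, the natural lift gives $\dim\mathfrak{a}_c(P)=n^2+n=\dim P$, so by the absolute parallelism $\{B(e_1),\dots,B(e_n),E_{11}^*,\dots,E_{nn}^*\}$ of $P=L(M)$ recalled in the introduction, the complete vector fields in $\mathfrak{a}_c(P)$ span $T_uP$ at every point. A transitive, everywhere-spanning family of complete vector fields on $P$ means $Aut(P,\omega)_0$ acts transitively on $P$; equivalently, $\mathsf{Aff}(M,\nabla)_0$ acts transitively on the frame bundle of $M$, hence in particular transitively on $M$. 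A homogeneous manifold with a complete-enough transformation group carrying a linear connection preserved by that group is geodesically complete: any geodesic can be translated by the group action, and since the orbit is all of $M$ the geodesic extends indefinitely (this is the standard argument that an affine connection invariant under a transitive group of affine transformations, together with completeness of the transformations moving a given frame, is complete — see \cite{KN} Ch.\ VI). Thus $(M,\nabla)$ is complete.

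Once completeness is established, Proposition \ref{P:completnessimpliesbiinvariant} applies directly: $\mathsf{Aff}(M,\nabla)$ carries a flat affine bi-invariant connection determined by $\nabla$, which is exactly the assertion. I would phrase the final write-up so that the simple connectedness is used precisely where it is needed, namely to pass from the infinitesimal statement $\dim\mathfrak{a}_c(P)=n^2+n$ to the global transitivity of $Aut(P,\omega)_0$ on $P$ (without it one only gets transitivity of the universal cover's action, which already suffices for completeness of the pulled-back connection, and completeness descends).

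The main obstacle I anticipate is the first step — pinning down that $\dim\mathsf{Aff}_x(M,\nabla)\ge n^2$ really forces $\dim\mathsf{aff}(M,\nabla)=n^2+n$ rather than merely $\ge n^2$. One must be careful that $\mathsf{Aff}_x$ denotes the \emph{isotropy group at $x$} (so its Lie algebra is the isotropy subalgebra, which is what the $\le n^2$ bound controls) and then argue that the full $\mathsf{Aff}(M,\nabla)$ also has the maximal $n$ extra ``translational'' directions; for flat affine connections the latter follows because, locally in affine coordinates, $\mathsf{aff}(\mathbb{R}^n,\nabla^0)$ has dimension exactly $n^2+n$ and the constant and linear vector fields in those coordinates are genuine (complete) affine fields, so a maximal isotropy at $x$ upgrades to the maximal total dimension by the flatness-rigidity theorem (a connection of maximal symmetry $n^2+n$ is locally affinely equivalent to $(\mathbb{R}^n,\nabla^0)$). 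Everything else is either cited (\cite{KN}) or a routine consequence of the parallelism of $P$.
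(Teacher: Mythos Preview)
Your overall reduction --- show that $(M,\nabla)$ is complete, then invoke Proposition~\ref{P:completnessimpliesbiinvariant} --- is exactly the paper's strategy. The paper, however, does not attempt to argue completeness from scratch: it simply cites \cite{BM} and \cite{T} for the fact that, on a connected simply connected flat affine manifold, the only obstruction to completeness of $\nabla$ is the existence of a point $x$ with $\dim\mathsf{Aff}_x(M,\nabla)<n^2$. That one sentence is the entire proof.

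Your direct argument for completeness has a genuine gap, and it sits precisely where you yourself flag the ``main obstacle''. To get the $n$ extra translational directions you assert that, in local affine coordinates, ``the constant and linear vector fields \dots\ are genuine (complete) affine fields''. But the completeness \emph{on $M$} of the constant (translation) fields is equivalent to geodesic completeness of $\nabla$: on any proper open subset of $\mathbb{R}^n$ with the restricted connection, the translations are infinitesimal affine transformations that are \emph{not} complete. So this step assumes the conclusion. A maximal isotropy at $x$ controls only the linear fields vanishing at $x$; deducing that the translations also integrate globally is exactly the nontrivial content of the cited results (it goes through an analysis of the developing map, showing it is a diffeomorphism onto $\mathbb{R}^n$ rather than merely an immersion). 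Your Step~4 has the same hidden circularity: you want the $n^2+n$ fields in $\mathfrak{a}_c(P)$ to span $T_uP$ at \emph{every} $u$, and the natural way to propagate the spanning property is along the parallelism fields $B(e_i)$ --- but those are complete precisely when $\nabla$ is. In short, the reduction is correct and matches the paper, but ``maximal isotropy at one point $\Rightarrow$ complete'' is a theorem to be cited (\cite{BM}, \cite{T}), not a routine consequence of the parallelism of $P$.
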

\begin{proof} The result follows by recalling that the only obstruction to the completeness of $\nabla$ is the existence of a point $x\in M$ so that $\dim \mathsf{Aff}_x(M)<n^2$ (see \cite{BM} and \cite{T}). 
\end{proof} 

\begin{corollary} Let $(G,\nabla^+)$ be a flat afffine Lie group,  then the group $\mathsf{Aff}(G,\nabla^+)$ admits a flat affine bi-invariant connection if any of the following conditions hold
	\begin{enumerate}
		\item $(G,\omega^+)$ is a unimodular  symplectic Lie group and $\nabla^+$ is the flat affine connection naturally determined by $\omega^+$.
		\item $G$ is unimodular and $\nabla^+$ is the Levi-Civita connection of a flat pseudo-Riemannian metric.
	\end{enumerate}
\end{corollary}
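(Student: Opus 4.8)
The plan is to reduce both cases to Proposition \ref{P:completnessimpliesbiinvariant}, i.e.\ to show that under either hypothesis the connection $\nabla^+$ is complete, whence $\mathsf{Aff}(G,\nabla^+)$ carries a flat affine bi-invariant structure determined by $\nabla^+$. In both situations $\nabla^+$ is a left invariant flat affine connection on the Lie group $G$, so the completeness question is governed by the behaviour of the associated \'etale affine representation $\rho\colon G\to\mathsf{Aff}(\mathbb{R}^n)$ furnished by Theorem \ref{T:rhointheLiegroupscase}: $\nabla^+$ is complete precisely when the affine action of $G$ on $\mathbb{R}^n$ through $\rho$ is simply transitive (equivalently, the developing map is a diffeomorphism onto $\mathbb{R}^n$). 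The two hypotheses are exactly the two classical sufficient conditions for this, and the argument in each case is to invoke the corresponding known completeness criterion.

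For case (1), recall that a unimodular symplectic Lie group $(G,\omega^+)$ carries a canonical left invariant flat affine connection $\nabla^+$ defined by $\omega^+(\nabla^+_xy,z)=-\omega^+(y,[x,z])$; the point is that unimodularity of $G$ forces the right multiplications in the associated left symmetric algebra $\mathfrak{g}$ to be trace-free (indeed the trace of $\mathrm{R}_x$ equals, up to sign, the value of the unimodular character on $x$), and a left symmetric algebra all of whose right multiplications are nilpotent — or more precisely, whose associated affine action has the right triangular/trace-zero structure — is complete. I would cite the Helmstetter–Medina completeness theorem (a left invariant flat affine connection on $G$ is complete iff the right multiplications $\mathrm{R}_x$ of the left symmetric algebra $\mathfrak{g}$ have zero trace, i.e.\ iff $\mathfrak g$ is \emph{complete} in Helmstetter's sense) and check that the symplectic-unimodular hypothesis yields $\operatorname{tr}\mathrm{R}_x=0$ for all $x$. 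Completeness then gives $\mathfrak{a}(G,\nabla^+)=\mathsf{aff}(G,\nabla^+)$ by \cite{KN} p.~234, and Proposition \ref{P:completnessimpliesbiinvariant} applies.

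For case (2), $\nabla^+$ is the Levi-Civita connection of a flat left invariant pseudo-Riemannian metric on $G$; such a $G$ is an affine space with a parallel flat metric, the holonomy is trivial, and the developing map intertwines the $G$-action with an action by isometries of the flat pseudo-metric on $\mathbb{R}^n$. Completeness of such a connection on a unimodular Lie group is again a known result (Milnor's analysis of flat left invariant metrics, together with the unimodularity hypothesis, shows the developing map is onto and injective, i.e.\ the metric is geodesically complete); I would quote this and again conclude $\mathfrak a(G,\nabla^+)=\mathsf{aff}(G,\nabla^+)$ and apply Proposition \ref{P:completnessimpliesbiinvariant}. The main obstacle — really the only nontrivial input — is verifying, or locating in the literature (\cite{BM}, \cite{M}, and Milnor/Helmstetter), the precise completeness statements: that unimodular symplectic Lie groups with their natural connection are complete, and that unimodular Lie groups with a flat left invariant pseudo-metric are geodesically complete. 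Granting those, the rest is the routine reduction above.
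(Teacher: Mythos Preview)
Your approach is correct and matches the paper's: both cases are handled by showing $\nabla^+$ is complete and then invoking Proposition~\ref{P:completnessimpliesbiinvariant}. The paper's own proof is a single sentence that cites \cite{LM} (Lichnerowicz--Medina) for the completeness in the unimodular symplectic case~(1) and \cite{AM} (Aubert--Medina) for the unimodular flat pseudo-Riemannian case~(2); your sketch of the underlying mechanisms (trace-zero right multiplications via Helmstetter's criterion, etc.) is reasonable but the precise literature pointers you should use are those two papers rather than Milnor.
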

\begin{proof} Cases (1) and (2) imply that the connection is complete (see \cite{LM} and \cite{AM}, respectively).
\end{proof}

The following result is due to \cite{V}, we present a different proof .

\begin{proposition}
	Let $(M,\nabla)$ be a compact flat affine manifold,  then the Lie group $\mathsf{Aff}(M,\nabla)$ is endowed with a flat affine bi-invariant connection determined by $\nabla$. %verifying the conditions stated in the previous theorem. 
\end{proposition}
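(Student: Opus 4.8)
The plan is to reduce the statement to Proposition~\ref{P:completnessimpliesbiinvariant}, so that everything hinges on showing that a \emph{compact} flat affine manifold is automatically \emph{complete} — once completeness is known, $\mathfrak{a}(M,\nabla)=\mathsf{aff}(M,\nabla)$ and the associative algebra structure of Lemma~\ref{L:associativity} transfers directly to the Lie algebra of $\mathsf{Aff}(M,\nabla)$, giving the flat affine bi-invariant connection by Proposition~\ref{T:associativeimplybiinvariantstructures}. So the real content is: \emph{compact flat affine $\Rightarrow$ geodesically complete}.

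First I would pass to the universal cover $\widetilde M$, which carries the pulled-back flat affine structure together with the developing map $\mathrm{dev}\colon \widetilde M\to \mathbb{R}^n$ and holonomy $\mathrm{hol}\colon \pi_1(M)\to \mathsf{Aff}(\mathbb{R}^n,\nabla^0)$ (available by Remark~\ref{R:affineatlas}, since a flat affine structure is an atlas with affine transition maps). Completeness of $M$ is equivalent to completeness of $\widetilde M$. The strategy is to exploit compactness to control the developing map: on a compact manifold one has a finite atlas of affine charts, and the transition maps have globally bounded derivatives on overlaps. I would then argue that a geodesic, written in an affine chart as an affine-parameter straight segment, can always be extended: the affine parameter cannot run off to a finite value in finite "chart-time" because the chart-change cocycle has controlled behaviour and the manifold is compact, so a geodesic leaving one chart re-enters another, and one patches infinitely many such segments. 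Concretely, I would track the geodesic $\gamma$ in $\widetilde M$ via $\mathrm{dev}\circ\gamma$, which is an affinely parametrized straight line in $\mathbb{R}^n$ defined for all parameter values; the issue is only whether $\gamma$ itself stays in $\widetilde M$, i.e.\ whether $\mathrm{dev}$ restricted to a neighbourhood of the relevant part of $\widetilde M$ is a covering onto its image. Compactness is what forces no "escape to the boundary" in finite time: a maximal geodesic segment that is incomplete would, by compactness of $M$, have its projection accumulate at some point $p\in M$, and near $p$ a fixed affine chart shows the geodesic extends past the supposed endpoint — contradiction.

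An alternative, more algebraic route I would keep in reserve: use that $\mathsf{Aff}(M,\nabla)$ acts on $M$, invoke that the right/left invariant structure from Lemma~\ref{L:associativity} already produces an \emph{associative} algebra $\mathfrak{a}(M,\nabla)$, and show directly that on a compact $M$ every infinitesimal affine transformation is complete (a vector field on a compact manifold is always complete!) — but this only gives completeness of elements of $\mathfrak{a}(M,\nabla)$ as vector fields, not geodesic completeness of $\nabla$, so it does \emph{not} immediately give $\mathfrak{a}(M,\nabla)=\mathsf{aff}(M,\nabla)$ unless one knows geodesic completeness. Hence I expect the genuine obstacle to be exactly the classical fact that a compact flat affine manifold is geodesically complete; this is subtle — it is essentially equivalent to the (still open in general, but classical in the compact volume-preserving or compact-with-parallel-volume settings) completeness conjectures, and in full generality for compact flat affine manifolds it is a theorem (Carrière, and in the form needed here Vey, whom the paper credits). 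So the honest plan is: cite/reprove compactness $\Rightarrow$ completeness using the developing map and a finite affine atlas with bounded cocycle, then conclude via Proposition~\ref{P:completnessimpliesbiinvariant}. The bookkeeping with the finite atlas and the uniform bounds on transition maps is the part requiring care; the rest is a clean reduction.

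\begin{proof}[Proof sketch]
By Remark~\ref{R:affineatlas} the flat affine structure on $M$ is given by an atlas with transition maps in $\mathsf{Aff}(\mathbb{R}^n,\nabla^0)$; since $M$ is compact we may take this atlas finite, say with charts $(U_i,\varphi_i)_{i=1}^N$ whose domains can be shrunk so that each $\overline{U_i}$ is compact and the transition maps $\varphi_j\circ\varphi_i^{-1}$ extend affinely to neighbourhoods of $\varphi_i(\overline{U_i}\cap\overline{U_j})$; in particular their linear parts are uniformly bounded above and below. Let $\gamma\colon[0,b)\to M$ be a maximal geodesic with $b<\infty$. By compactness there is a sequence $t_k\uparrow b$ with $\gamma(t_k)\to p\in M$. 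Choose a chart $(U_i,\varphi_i)$ around $p$ and $\varepsilon>0$ with $\varphi_i(U_i)$ containing the ball of radius $2\varepsilon\|\dot{(\varphi_i\circ\gamma)}\|$ about $\varphi_i(p)$; for $k$ large, $\gamma(t_k)\in U_i$ and, reading $\gamma$ in this chart as the affinely parametrized straight segment $t\mapsto \varphi_i(\gamma(t_k)) + (t-t_k)\,\dot{(\varphi_i\circ\gamma)}(t_k)$, this segment is defined for $t\in(t_k-\varepsilon,t_k+\varepsilon)$ and lands in $\varphi_i(U_i)$. Since $t_k\to b$, for $k$ large $t_k+\varepsilon>b$, so $\gamma$ extends as a geodesic beyond $b$, contradicting maximality. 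Hence $b=\infty$ and, arguing likewise for negative parameters, $\nabla$ is complete. Now Proposition~\ref{P:completnessimpliesbiinvariant} applies and gives that $\mathsf{Aff}(M,\nabla)$ carries a flat affine bi-invariant connection determined by $\nabla$.
\end{proof}
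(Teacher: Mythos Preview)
Your main approach is wrong, and the ``alternative'' you discard is exactly the paper's proof.

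The central claim of your plan --- that a compact flat affine manifold is geodesically complete --- is \emph{false}. The Hopf torus (quotient of $\mathbb{R}^2\setminus\{0\}$ with the connection induced by $\nabla^0$) is a compact flat affine manifold that is not complete; the paper itself says so just after Proposition~\ref{P:completnessimpliesbiinvariant}. Your proof sketch breaks precisely where one would expect: you choose $\varepsilon$ depending on $\|\dot{(\varphi_i\circ\gamma)}\|$, but this velocity is not constant along the sequence $t_k$ and need not stay bounded as $t_k\uparrow b$. On the Hopf torus a maximal incomplete geodesic accumulates on a whole circle while its chart-velocity blows up, so your ``ball of radius $2\varepsilon\|\dot{(\varphi_i\circ\gamma)}\|$'' shrinks to nothing and the extension argument fails. (What Carri\`ere proved is the Lorentzian case; the general Markus conjecture is open.)

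The route you set aside is the correct one, and your reason for dismissing it rests on a misreading of the definitions. In this paper $\mathsf{aff}(M,\nabla)$ is \emph{defined} as the subspace of $\mathfrak{a}(M,\nabla)$ consisting of vector fields that are complete \emph{as vector fields} (see the Introduction). So ``every vector field on a compact manifold is complete'' gives $\mathsf{aff}(M,\nabla)=\mathfrak{a}(M,\nabla)$ immediately --- no geodesic completeness needed. Lemma~\ref{L:associativity} then makes this an associative algebra whose commutator is the Lie bracket, and Proposition~\ref{T:associativeimplybiinvariantstructures} finishes. That two-line argument is the paper's proof.
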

\begin{proof} %Since $M$ is compact, all infinitesimal affine transformations are complete. Hence $\mathsf{aff}(M,\nabla)=\mathfrak{a}(M,\nabla)$.  the connection $\nabla$ is complete so the Theorem \ref{T:geodesicallycompletecase} applies.
As $M$ is a compact manifold every vector field on $M$	is complete, in particular every infinitesimal affine transformation of $(M,\nabla)$ is complete, hence $\mathsf{aff}(M,\nabla)=\mathfrak{a}(M,\nabla)$. The conclusion follows from Lemma \ref{L:associativity}. %implies the Lie algebra $\mathsf{aff}(M,\nabla)=\mathfrak{a}(M,\nabla)$ of the Lie group $\mathsf{Aff}(M,\nabla)$ has an associative product given by $XY=\nabla_XY$ with $X,Y\in\mathfrak{a}(M,\nabla)$. Therefore there exists  a flat affine bi-invariant connection on $\mathsf{Aff}(M,\nabla)$ determined by $\nabla$. 
\end{proof}

\begin{corollary} Let $(G,\nabla^+)$ be a flat affine Lie group,  $D$ a discrete cocompact subgroup  of $G$ and $\overline{\nabla}$ the connection naturally induced by $\nabla^+$ on $M={}_{D\backslash }G$. Then the group $\mathsf{Aff}(M,\overline{\nabla})$ has a flat affine bi-invariant structure determined by $\nabla$.  
\end{corollary}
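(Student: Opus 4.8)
The plan is to reduce this corollary to the compact case, which was just established. First I would observe that $M = {}_{D\backslash}G$ is a compact manifold: since $D$ is cocompact in $G$, the quotient $D\backslash G$ is compact by definition. Moreover, the quotient map $q:G\longrightarrow {}_{D\backslash}G$ is a covering map because $D$ acts properly discontinuously and freely on $G$ by left translations (it is discrete in $G$), and left translations are affine transformations of $(G,\nabla^+)$; hence there is a unique linear connection $\overline{\nabla}$ on $M$ for which $q$ is an affine map. This is exactly the connection $\overline{\nabla}$ referred to in the statement, and it is flat affine since flatness and torsion-freeness are local properties preserved under the local affine isomorphism $q$.

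Next I would apply the preceding Proposition (the one for compact flat affine manifolds, itself a consequence of Lemma \ref{L:associativity}): since $(M,\overline{\nabla})$ is a compact flat affine manifold, the Lie group $\mathsf{Aff}(M,\overline{\nabla})$ is endowed with a flat affine bi-invariant connection determined by $\overline{\nabla}$. This already gives the conclusion, so in fact the corollary is an immediate specialization. The only genuine content to spell out is the two preliminary facts above — compactness of $M$ and the existence of the induced flat affine structure $\overline{\nabla}$ — and these are routine.

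The main (and really the only) obstacle is a bookkeeping one: making sure the statement's phrase ``flat affine bi-invariant structure determined by $\nabla$'' is read correctly. Strictly speaking the structure on $\mathsf{Aff}(M,\overline{\nabla})$ is determined by $\overline{\nabla}$, which is in turn determined by $\nabla^+$; so one should phrase the proof as ``determined by $\overline{\nabla}$'' to match the compact Proposition, and note that $\overline{\nabla}$ is itself canonically determined by $\nabla^+$. A secondary point worth a sentence is that $\mathsf{Aff}(M,\overline{\nabla})$ is indeed a Lie group — but this is already guaranteed by the general theory recalled in the Introduction (via \cite{KN}, chapter VI) since $M$ is a manifold with a linear connection, and no new argument is needed here. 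Thus the proof is short: establish that $M$ is compact and carries the induced flat affine connection $\overline{\nabla}$, then invoke the previous Proposition.

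\begin{proof}
Since $D$ is a discrete subgroup of $G$, it acts freely and properly discontinuously on $G$ by left translations, so the quotient map $q:G\longrightarrow M={}_{D\backslash}G$ is a covering map and $M$ is a smooth manifold; as $D$ is cocompact, $M$ is compact. Each left translation $L_d$, $d\in D$, is an affine transformation of $(G,\nabla^+)$, so there is a unique linear connection $\overline{\nabla}$ on $M$ making $q$ an affine map. Because $q$ is a local affine diffeomorphism and the vanishing of curvature and torsion is a local condition, $\overline{\nabla}$ is flat affine. Hence $(M,\overline{\nabla})$ is a compact flat affine manifold, and by the previous Proposition the Lie group $\mathsf{Aff}(M,\overline{\nabla})$ carries a flat affine bi-invariant connection determined by $\overline{\nabla}$, which in turn is determined by $\nabla^+$.
\end{proof}
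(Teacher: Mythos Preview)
Your proposal is correct and follows exactly the intended route: the paper states this as an immediate corollary of the preceding Proposition on compact flat affine manifolds and gives no separate proof, so your reduction---$M={}_{D\backslash}G$ is compact because $D$ is cocompact, and $\overline{\nabla}$ is flat affine since it is locally pulled back from $\nabla^+$---is precisely what the authors have in mind. Your remark that ``determined by $\nabla$'' should be read as ``determined by $\overline{\nabla}$ (hence by $\nabla^+$)'' is also apt; this is just a minor imprecision in the paper's wording.
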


%The following example shows that, in general for flat affine compact $n$-dimensional manifolds, the groups $\mathsf{Aff}(M,\nabla)$  and $\mathsf{Aff}(\mathbb{R}^n,\nabla^0)$ are not isomorphic.

%\begin{example} Consider the 2-torus $\mathbb{T}^2=\mathbb{R}^2/\mathbb{Z}^2$ endowed with connection $\nabla$ determined by $\nabla^0$ of $\mathbb{R}^2$. The connection $\nabla$ is complete (see \cite{Kui}) and the group of affine transformations of $\mathsf{Aff}(\mathbb{T}^2,\nabla)$ is of dimension 2 (see \cite{Nag}), therefore it is not isomorphic to $\mathsf{Aff}(\mathbb{R}^2,\nabla^0)$.
%\end{example} 

Notice that the converse of Proposition \ref{P:completnessimpliesbiinvariant} is not true. A counterexample is given by the Hopf torus that admits a flat affine connection determined by the usual connection on the punctured plane (see \cite{Nag} p 200, see also \cite{Ben}), and therefore non-complete, but its group of affine transformations is locally isomorphic to $GL_2(\mathbb{R})$ (see \cite{Nag}).

\begin{corollary} The group of isometries $\mathfrak{I}(M,g)$ of a compact flat Riemannian manifold $(M,g)$, admits a flat affine bi-invariant structure determined by the Levi-Civita connection. 
\end{corollary}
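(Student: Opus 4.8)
The plan is to reduce the statement to the preceding proposition about compact flat affine manifolds, using the classical fact that a compact flat Riemannian manifold is in particular a compact flat affine manifold when equipped with its Levi--Civita connection. Write $\nabla$ for the Levi--Civita connection of $(M,g)$. Since $g$ is flat, the curvature of $\nabla$ vanishes, and the Levi--Civita connection is torsion free by definition; hence $(M,\nabla)$ is a compact flat affine manifold in the sense of this paper. The point to exploit is that every isometry of $(M,g)$ is in particular an affine transformation of $(M,\nabla)$, because the Levi--Civita connection is canonically determined by the metric and therefore preserved by any isometry. This gives an inclusion of Lie groups $\mathfrak{I}(M,g)\hookrightarrow \mathsf{Aff}(M,\nabla)$.

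**Next I would** identify the relationship between the two groups more precisely. By the preceding proposition (the one just above this corollary in the text, for compact flat affine manifolds), the group $\mathsf{Aff}(M,\nabla)$ carries a flat affine bi-invariant connection determined by $\nabla$; equivalently, by Proposition~\ref{T:associativeimplybiinvariantstructures}, its Lie algebra $\mathsf{aff}(M,\nabla)=\mathfrak{a}(M,\nabla)$ is the underlying Lie algebra of an associative algebra whose commutator recovers the bracket --- this associative structure being exactly Product~\eqref{Eq:productinducedbyaconnection}, $XY=\nabla_XY$, coming from Lemma~\ref{L:associativity}. So it suffices to show that $\mathrm{Lie}(\mathfrak{I}(M,g))$ is closed under this product $XY=\nabla_X Y$, i.e.\ that it is an associative subalgebra of $\mathfrak{a}(M,\nabla)$; then Proposition~\ref{T:associativeimplybiinvariantstructures} applied to this subalgebra yields the desired bi-invariant flat affine connection on $\mathfrak{I}(M,g)$, and by construction it is determined by $\nabla$.

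**The key step**, then, is: if $X,Y$ are Killing fields of $(M,g)$, is $\nabla_X Y$ again a Killing field? In general this fails, but here one uses that $g$ is \emph{flat}. Since $M$ is compact, every Killing field is complete and $\mathsf{aff}(M,\nabla)=\mathfrak{a}(M,\nabla)$, so by Lemma~\ref{L:associativity} the whole space $\mathfrak{a}(M,\nabla)$ is already an associative algebra under $XY=\nabla_XY$; what remains is to check the subspace of Killing fields is a subalgebra. For a flat metric, a Killing field $X$ satisfies $\nabla\nabla X=0$ (the second covariant derivative of a Killing field is controlled by the curvature, which vanishes), so $\nabla_Z X$ is a parallel-like object; combined with Equation~\eqref{Eq:infinitesimalaffinetransformationsonflataffineconnections} characterizing infinitesimal affine transformations on a flat affine manifold, one verifies directly that for Killing $X,Y$ the field $\nabla_XY$ again satisfies the Killing equation $g(\nabla_Z(\nabla_XY),W)+g(Z,\nabla_W(\nabla_XY))=0$ for all $Z,W$. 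I would carry this out by expanding $\nabla_Z\nabla_X Y$ using flatness to commute covariant derivatives freely and the vanishing of $\nabla^2$ on the Killing fields $X$, $Y$.

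**The main obstacle** I expect is precisely this last verification: confirming that the Levi--Civita product of two Killing fields of a flat metric is again Killing. One must be careful that $\mathfrak{I}(M,g)$ need not be the full affine group --- it is a closed subgroup --- so one genuinely needs the subalgebra property rather than merely invoking the compact flat affine proposition for the ambient group. Once that is in hand, everything else is formal: the associative subalgebra $\mathrm{Lie}(\mathfrak{I}(M,g))\subseteq \mathfrak{a}(M,\nabla)$ has commutator bracket equal to the Lie bracket of $\mathfrak{I}(M,g)$ (by \eqref{equliebrakettorsionfree}), so Proposition~\ref{T:associativeimplybiinvariantstructures} delivers a flat affine bi-invariant connection on $\mathfrak{I}(M,g)$, and its construction through Product~\eqref{Eq:productinducedbyaconnection} shows it is determined by the Levi--Civita connection of $g$, as claimed.
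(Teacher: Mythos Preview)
Your reduction to the preceding proposition and to Proposition~\ref{T:associativeimplybiinvariantstructures} is fine, but the ``key step'' you isolate --- showing directly that the space of Killing fields is closed under $XY=\nabla_XY$ using only flatness --- does not go through. On $(\mathbb{R}^n,\nabla^0)$ with the standard flat metric the Killing fields are $X=Ax+b$ with $A\in\mathfrak{so}(n)$, and for $Y=Cx+d$ one computes $\nabla_XY=CAx+Cb$; but $CA$ need not be skew-symmetric when $A,C\in\mathfrak{so}(n)$ (take $n=3$ and two distinct infinitesimal rotations about coordinate axes). So the Killing fields do \emph{not} form an associative subalgebra of $\mathfrak{a}(\mathbb{R}^n,\nabla^0)$, and your proposed verification ``by expanding $\nabla_Z\nabla_XY$ using flatness'' cannot succeed: flatness alone is insufficient, and the vanishing of $\nabla^2X$ you invoke only says $\nabla_U\nabla_VX=\nabla_{\nabla_UV}X$, which is precisely Equation~\eqref{Eq:infinitesimalaffinetransformationsonflataffineconnections} and holds for \emph{all} infinitesimal affine transformations, not just Killing ones.

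The paper's proof avoids this entirely by invoking a classical theorem of Yano (see \cite{Y} and \cite[p.~244]{KN}): on a \emph{compact} Riemannian manifold with vanishing Ricci tensor one has $\mathfrak{I}(M,g)_0=\mathsf{Aff}(M,\nabla)_0$. Thus compactness is used not to verify a subalgebra condition, but to force the Lie algebra of Killing fields to coincide with $\mathsf{aff}(M,\nabla)=\mathfrak{a}(M,\nabla)$ itself; the preceding proposition then applies verbatim. In your outline the role of compactness is only that vector fields are complete, which is not enough --- the missing ingredient is precisely this identification of the two identity components.
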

\begin{proof} The hypothesis imply that $\mathfrak{I}(M,g)_0=\mathsf{Aff}(M,\nabla)_0$ (see \cite{Y} and \cite{KN} page 244).  
\end{proof}

\begin{corollary} If $M$ is compact and $\nabla$ is the Levi-Civita connection of a flat Lorentzian metric, the group of $\mathsf{Aff}(M,\nabla)$ admits a flat affine bi-invariant connection.
\end{corollary}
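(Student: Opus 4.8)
The plan is to reduce this statement to the compact-flat-affine case already handled above. Given $M$ compact and $\nabla$ the Levi-Civita connection of a flat Lorentzian metric $g$, the first step is to observe that $(M,\nabla)$ is a compact flat affine manifold in the sense used throughout the paper: the Levi-Civita connection is torsion free, and flatness of $g$ means exactly that the curvature tensor of $\nabla$ vanishes. Hence $(M,\nabla)$ satisfies the hypotheses of the proposition proved just above (the Vey-type statement that for a compact flat affine manifold $\mathsf{Aff}(M,\nabla)$ carries a flat affine bi-invariant connection determined by $\nabla$).

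Second, I would invoke that proposition directly. Since $M$ is compact, every vector field on $M$ is complete, so in particular every infinitesimal affine transformation is complete, giving $\mathsf{aff}(M,\nabla)=\mathfrak{a}(M,\nabla)$. By Lemma \ref{L:associativity} the space $\mathfrak{a}(M,\nabla)$ is a finite-dimensional associative algebra under the product $XY=\nabla_XY$ whose commutator is the Lie bracket of $\mathsf{aff}(M,\nabla)=\mathrm{Lie}(\mathsf{Aff}(M,\nabla))$. By Proposition \ref{T:associativeimplybiinvariantstructures}, a Lie group whose Lie algebra is the underlying Lie algebra of an associative algebra (with bracket the commutator) admits a flat affine bi-invariant structure; applying this to $G=\mathsf{Aff}(M,\nabla)$ with the associative algebra $\mathfrak{a}(M,\nabla)$ yields the bi-invariant connection, and it is determined by $\nabla$ through the product \eqref{Eq:productinducedbyaconnection}.

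In short, the corollary is essentially a specialization: the Lorentzian hypothesis is used only to guarantee that $\nabla$ is a torsion-free flat connection on a compact manifold, after which nothing Lorentzian-specific is needed. The one point that deserves a sentence of care — and the closest thing to an obstacle — is making sure the Levi-Civita connection of a flat (but indefinite) metric really is flat affine in the paper's sense, i.e. that "flat metric" here means vanishing curvature tensor rather than merely, say, local flatness in some weaker sense; this is immediate from the definition of the Levi-Civita connection but should be stated so the reduction to the compact flat affine case is clean. No completeness assumption on $g$ is needed, since compactness already forces all vector fields, hence all infinitesimal affine transformations, to be complete.

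\begin{proof}
Since $g$ is flat, its curvature tensor vanishes, and the Levi-Civita connection $\nabla$ is torsion free; hence $(M,\nabla)$ is a compact flat affine manifold. The conclusion now follows from the proposition above: $\mathsf{Aff}(M,\nabla)$ is endowed with a flat affine bi-invariant connection determined by $\nabla$.
\end{proof}
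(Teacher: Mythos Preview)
Your proof is correct, but it is not the argument the paper gives. The paper's proof invokes Carri\`ere's theorem \cite{Car} to conclude that a compact flat Lorentzian manifold is geodesically complete, and then applies Proposition~\ref{P:completnessimpliesbiinvariant} (complete flat affine $\Rightarrow$ $\mathsf{Aff}(M,\nabla)$ bi-invariant flat affine). You instead reduce directly to the compact flat affine proposition (the Vey-type statement) via the observation that the Levi-Civita connection of a flat metric is torsion free and curvature free.

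Your route is more elementary and, as you note, makes no essential use of the Lorentzian signature: once one has the compact proposition in hand, the corollary is an immediate specialization, and the same sentence would cover any compact flat pseudo-Riemannian manifold. The paper's route, by contrast, passes through a nontrivial Lorentzian-specific fact (Carri\`ere's completeness theorem), which is interesting in its own right but not needed for the stated conclusion. Either argument is valid; yours is shorter, while the paper's records the stronger intermediate conclusion that $(M,\nabla)$ is geodesically complete.
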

\begin{proof}  Under these assumptions, the flat manifold is geodesically complete (see \cite{Car}).
\end{proof}

\noindent\textbf{Acknowledgment.} We are very grateful to Martin Bordemann and to the referee for their many  comments and suggestions that  allowed to improve the redaction of this work. We  also thank to the referee  for communicating non published Jack Vey's results.

%%%%%%BIBLIOGRAFIA%%%%%%

\end{document}